\theoremstyle{plain}
\newtheorem{thm}{Theorem}[section]
\newtheorem{theorem}[thm]{Theorem}
\newtheorem{lemma}[thm]{Lemma}
\newtheorem{corollary}[thm]{Corollary}
\newtheorem{proposition}[thm]{Proposition}
\theoremstyle{definition}
\newtheorem{remark}[thm]{Remark}
\newtheorem{defin}[thm]{Definition}
\newtheorem{example}[thm]{Example}
\numberwithin{equation}{section}
\newcommand{\sA}{{\mathcal A}}
\newcommand{\sB}{{\mathcal B}}
\newcommand{\sD}{{\mathcal D}}
\newcommand{\sF}{{\mathcal F}}
\newcommand{\sH}{{\mathcal H}}
\newcommand{\sK}{{\mathcal K}}
\newcommand{\sR}{{\mathcal R}}
\newcommand{\sS}{{\mathcal S}}
\newcommand{\sX}{{\mathcal X}}
\newcommand{\sY}{{\mathcal Y}}
\newcommand{\sZ}{{\mathcal Z}}
\newcommand{\PP}{\ensuremath{\mathbb{P}}}
\newcommand{\CC}{\ensuremath{\mathbb{C}}}
\newcommand{\ZZ}{\ensuremath{\mathbb{Z}}}
\newcommand{\hol}{\ensuremath{\mathcal{O}}}
\newcommand\la{\lambda}
\newcommand\Lam{\Lambda}
\newcommand\s{\sigma}
\newcommand\al{\alpha}
\newcommand\be{\beta}
\newcommand\Ga{\Gamma}
\newcommand\De{\Delta}
\newcommand\de{\delta}
\DeclareMathOperator{\Pic}{Pic}
\newcommand{\ra}{\ensuremath{\rightarrow}}
\def\eea{\end{eqnarray*}}
\def\bea{\begin{eqnarray*}}
\newcommand\dual{\mathrel{\raise3pt\hbox{$\underline{\mathrm{\thinspace d
\thinspace}}$}}}
\newcommand\qe{\ifhmode\unskip\nobreak\fi\quad $\Box$}       
\def\BOX{\hfill\lower.5\baselineskip\hbox{$\Box$}}
\newtheorem{theo}{Theorem}[section]
\newtheorem{remarkk}[theo]{Remark}
\newenvironment{dedication}
        {\begin{quotation}\begin{center}\begin{em}}
        {\par\end{em}\end{center}\end{quotation}}
\title [General birationality and Hyperelliptic Jacobians ]{General  birationality  and Hyperelliptic Theta divisors}
\author{Fabrizio Catanese}
\address {Mathematisches Institut der Universit\"at Bayreuth\\
NW II,  Universit\"atsstr. 30\\
95447 Bayreuth}
\email{fabrizio.catanese@uni-bayreuth.de}
\address{  Korea Institute for Advanced Study, Hoegiro 87, Seoul, 
133--722.}
\thanks{AMS Classification: 14E05, 14E25, 14M99, 14K25, 14K99, 14H40, 32J25, 32Q55, 32H04.\\ 
Key words: Birational maps, Hypersurfaces in Abelian varieties, Canonical maps,  Gauss maps, Theta divisors,
Hyperelliptic curves, Graded Rings of Theta functions.\\ }
\date{\today}
\begin{document}

\begin{abstract}

 We first state a condition ensuring that  having a birational map onto the image
is an open property for families of irreducible normal non uniruled varieties.  We give then some criteria to
ensure general birationality for a family of rational maps, via specializations.

Among the  applications is a new   proof of the main result  of \cite{Cat-Ces} that,   for a general  pair $(A,X)$ of an (ample) Hypersurface $X$ in an Abelian Variety $A$,
the canonical map  $\Phi_X$ of $X$   is birational onto its image if the polarization given by $X$ is not principal. 

The proof is also based on a careful study   of the Theta divisors of the Jacobians of Hyperelliptic curves, and some related  geometrical constructions.  We investigate  these here also in view  of their beauty and of their  independent interest, as they lead to a description of the rings  of Hyperelliptic theta functions.

\end{abstract}

\maketitle

\tableofcontents
\begin{dedication}

\end{dedication}



\section{Introduction}

One of the main general problems in algebraic geometry is the study of the canonical and pluricanonical maps
of varieties of general type, especially the problem of establishing their birationality, see for instance
\cite{cm}, \cite{cil-santacruz}, \cite{chen1}, \cite{chen3}, just to name a few items.

We describe here a simple and  relatively  general method for establishing birationality of a rational map  for the general variety in a family, via specializations, see Theorems \ref{birational} and  \ref{blocks}.

The applications can be many  (see \cite{sing-bidouble} in the case of surfaces), but we focus here on the use of this method  for the problem which was our original motivation, and   we exhibit  a new self-contained proof  (Theorem \ref{genbirat}) of the main result of \cite{Cat-Ces}. The present proof  couples this method with an interesting study,   of  the geometry of Hyperelliptic Jacobians and of  some of their unramified cyclic coverings.

In the course of doing this we establish some general results on the graded rings of Theta Functions on Hyperelliptic Jacobians, see Theorems \ref{thetapullback} and  \ref{theta}.

\section{Openness of birationality and general birationality}

 As already mentioned, our present main problem is: given a family of varieties $\{ X_t\}_{t \in T}$, and a family 
of morphisms $ f_t : X_t \ra Y_t$ (respectively, rational maps), when can we conclude, from the fact that  $f_0$
is birational onto its image $Y_0$, that, for general $t$, $f_t$ is birational onto $Y_t$?

Let us start with a negative example: let $X$ be a hypersurface in $\PP^N$
of degree $d$, let $P$ be   a point,  $P \in  \PP^N$, and consider the projection with centre $P$,
$ \pi_P : \PP^N \setminus{P} \ra \PP^{N-1}$.

If the hypersurface $X_0$ has   multiplicity $d-1$ at the point $P$, then
$\pi_P$ induces a birational map between $X_0$ and $\PP^{N-1}$,
but for a general $X$ the projection is not birational, having degree equal to 
$  d- mult_P(X) $, which is $\geq 2$ as soon as $mult_P(X) < d-1$.

The important feature of this example, which motivates the assumption
in the following  theorems, is  that $X_0$ is a uniruled variety, indeed it is a rational variety: and this must be avoided.

The next example, instead, clarifies the hypotheses needed for the validity of
an  assertion made  in the first version of this paper (see for instance the next Proposition \ref{smooth}).

\begin{example}\label{counterex}
Consider in $\PP^N \times \PP^1$ the following family 
$$ \sX : = \{ (x, (\la_0, \la_1)) | \la_0^m f(x) + \la_1^m g(x) = 0 \} ,$$
where the Hypersurfaces $X_0 : = \{ f(x)=0\}$ and $X_\infty : = \{ g(x)=0\}$
intersect transversally, $X_\infty $ is smooth, while $X_0$ has only one isolated
singular point $P$ of multiplicity $m$, and is of general type if $d : = \deg (f) = \deg (g) \geq  N+2 + m$.

An elementary calculation shows that $Sing (\sX) = \{ (P, (1,0)\}$, a point of multiplicity equal to $m$.

Hence $X_0, \sX$ are normal (being hypersurfaces in a smooth manifold).

Blowing up the only singular point, we get 
$$ \sZ \ra \sX \subset \PP^N \times \PP^1 , p :  \sZ   \ra \PP^1,$$
and the fibre $Z_0$ consists of the union of the blow up $X_0'$ of $X_0$ in $P$, together with the hypersurface 
$Z'_0$ in the exceptional $\PP^N$, 
$$ Z_0' : = \{ \phi_m(x_1, \dots, x_n) + \la^m g (P) =0\},$$ 
where we assume that $P = (1, 0, \dots, 0)$ and that $\phi$ is the leading term of the Taylor development of $f$ at $P$.

For  $m \geq N+ 2$ and $\phi$ general, $ Z_0'$ is a smooth variety with ample canonical system,
and $X_0' \cap Z_0' = \{ \phi_m(x_1, \dots, x_n)=0\}$, the exceptional divisor of $\pi : X_0' \ra X_0$.
 
\end{example}

The following Proposition is a direct  consequence of Hironaka's II Main Theorem in \cite{hironaka}
\begin{proposition}
\label{smooth}
Assume that we have a 1-dimensional  projective  family $p : \sS \ra T$  where $\sS$ is smooth of dimension $n+1$, $ T$ is a smooth connected curve, $0 \in T$, and  we have a rational map
$$ f : \sS \dasharrow  \PP^N.$$
Then there exists a modification $ \pi : \sZ \ra \sS$ such that, setting 
$$F : = f \times p:  \sS \dasharrow   \PP^N \times T,$$ and denoting  by $\sY$  the closure of the image of $F$,

(i) $\sZ$ is smooth, 

(ii) $F' : = F \circ \pi$ becomes a morphism  $F' : \sZ \ra \sY$,

(iii) all the fibres of $p' : = p \circ \pi : \sZ \ra T$ consist of the union of the strict transform $S'_t$ of $S_t : = p^{-1} (t)$
with other ruled components.

(iv) In particular, if the indeterminacy locus of $f$ is contained in $S_0$, then $S'_t = S_t$ for $t \neq 0$.

(v) It follows that, if $\Ga \ra \sS \times  \PP^N \times T$ is the normalization of the graph of $F$, then the fibres 
$\Ga_t$ consist of the strict transform of $S_t$ plus some uniruled components.

(vi) Shrinking $T$, we may assume that in (iii) and (v) other uniruled components only occur for $t=0$.

\end{proposition}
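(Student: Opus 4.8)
The plan is to invoke Hironaka's resolution of indeterminacy and the structure of the resulting exceptional fibers, tracking carefully how the ruledness emerges only from the blown-up indeterminacy locus.

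**Step 1 (Resolution).** First I would apply Hironaka's II Main Theorem to the rational map $F = f\times p : \sS \dasharrow \PP^N\times T$, which resolves the indeterminacy of $F$ by a finite sequence of blow-ups along smooth centers contained in the indeterminacy locus of $F$. This yields the modification $\pi:\sZ\ra\sS$ with $\sZ$ smooth (giving (i)) and turns $F'=F\circ\pi$ into an honest morphism onto the closure $\sY$ of the image (giving (ii)). Since the composite $p\circ\pi=p'$ is still a morphism to the curve $T$, and $\pi$ is an isomorphism away from the (finite set of) centers of blow-up, the generic behaviour over $T$ is controlled by how these centers sit over the points of $T$.

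**Step 2 (Fiber structure).** For (iii) I would analyze the fibers $p'^{-1}(t)$. Each blow-up in Hironaka's sequence is along a smooth center $C$ lying over some point of $T$; the exceptional divisor introduced is a projective bundle over $C$, hence is ruled, and its intersection with a fiber $p'^{-1}(t)$ is again ruled (or empty). Since $\pi$ restricts to a birational morphism from the strict transform $S'_t$ of $S_t$ onto $S_t$, the fiber $p'^{-1}(t)$ decomposes as $S'_t$ together with the exceptional components over $t$, each of which is ruled. This gives the claimed decomposition into $S'_t$ plus ruled components.

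**Step 3 (Localization of the indeterminacy and the graph).** For (iv), if the indeterminacy locus of $f$ is contained in $S_0$, then all centers of blow-up lie over $t=0$, so $\pi$ is an isomorphism over a neighborhood of every $t\neq 0$ in $T$; consequently $S'_t=S_t$ for $t\neq 0$. For (v) I would compare $\sZ$ with the normalization $\Ga$ of the graph of $F$: both dominate $\sS$ and resolve $F$, and by the universal property of the graph there is a birational morphism $\sZ\ra\Ga$ over $\sS\times\PP^N\times T$. The image of a ruled variety under a morphism is uniruled, and the strict transform of $S_t$ maps to the strict transform of $S_t$ in $\Ga$, so each $\Ga_t$ consists of this strict transform plus uniruled components (the images of the ruled exceptional components). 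Finally (vi) follows by shrinking $T$: since the non-isomorphism locus of $\pi$ (equivalently, the image of the exceptional divisor under $p'$) is a closed subset of $T$ not containing the generic point, it is a finite set; after removing its points other than $0$, the only fiber carrying extra uniruled components is the one over $t=0$.

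The main obstacle I expect is the precise bookkeeping in Step 2: one must argue that the strict transform $S'_t$ appears as a genuine component of $p'^{-1}(t)$ with the remaining components \emph{exactly} the exceptional ruled pieces, and that no component of the original fiber is itself contracted or created spuriously. This requires knowing that Hironaka's centers meet each fiber $S_t$ in a subvariety of codimension at least one, so that blowing them up does not alter $S'_t$ birationally while adding only projective-bundle (hence ruled) material; Example \ref{counterex} shows that if one is careless about where the indeterminacy sits, the exceptional fiber can acquire components of the same dimension that are \emph{not} ruled, which is exactly why the normality and non-uniruledness hypotheses, and the localization in (iv), are essential.
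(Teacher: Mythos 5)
Your overall strategy is exactly the paper's: Hironaka's II Main Theorem gives (i) and (ii); exceptional divisors of blow-ups along smooth centres are projective bundles, hence ruled, giving (iii); (iv) is localization of the centres; (v) follows because $\sZ_t$ surjects onto $\Ga_t$ and images of ruled varieties are uniruled. Two preliminary points you leave implicit are worth making explicit: since $\sS$ is smooth of dimension $n+1$, the indeterminacy locus of $F=f\times p$ has codimension at least $2$, hence contains no fibre $S_t$ --- this is what guarantees that each $S_t$ has a well-defined strict transform mapping birationally onto it; and a new irreducible component of $\sZ_t$ is a divisor in $\sZ$ contained in the exceptional locus, hence must be a whole exceptional divisor (not merely its intersection with the fibre), which is why ruledness of the exceptional divisors suffices.

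There is, however, a genuine slip in your Steps 2 and 3: you assert that each Hironaka centre ``lies over some point of $T$'' and, for (vi), that the image under $p'$ of the exceptional locus is a proper closed subset of $T$. Neither is true in general: the indeterminacy locus of $f$ (and hence a centre of blow-up) may well dominate $T$ --- think of indeterminacy along a multisection of the family --- in which case the exceptional locus surjects onto $T$ and your argument for (vi) collapses. The conclusion of (vi) is nevertheless correct, and the paper's argument is the right one: an extra component of a fibre $\sZ_t$ must be an entire exceptional divisor contained in that fibre, each exceptional divisor can be contained in at most one fibre, and there are only finitely many exceptional divisors; hence only finitely many $t$ carry extra components (and similarly for $\Ga_t$, after also discarding the closed set of $t$ with $S_t$ reducible). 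You should replace your ``proper closed subset of $T$'' justification by this counting of exceptional divisors.
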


\begin{proof}
A preliminary observation is that, since $\dim (\sS) = n+1$,  the indeterminacy locus of $F$ does not contain any fibre $S_t$.

Assertions (i) and (ii)  follow from Hironaka's II Main Theorem of \cite{hironaka} (see especially page 140, and the affirmative answer to Question (F), (iii), the assertion that $f_r$ is a morphism, and that the centres $D_i$ are smooth
and contained in the indeterminacy locus of $f_i$)  ensuring that, via a sequence of blow ups with smooth centres, we get $ \pi : \sZ \ra \sS$  such that  the rational map  $F' : = F \circ \pi$ becomes a morphism on $\sZ$.
 
For (iii) we  just need to observe that, 
 if we blow up a submanifold $W$ of a manifold $M$, then the blow up $\tilde{M}$ contains as exceptional divisor 
the ruled manifold $\PP(N_{W|M})$. 
Hence the exceptional divisors are all ruled, hence so are the new irreducible components of the fibres of $p'$
(as they are divisors in $\sZ$ by our assumptions).

(iv) follows since the centres of the blow up are contained in the inverse image of the indeterminacy locus in $\sS$.

(v): since $\sZ_t $ surjects onto $\Ga_t$, the other components of $\Ga_t$ are  images of a ruled manifold, hence they are uniruled.

(vi) first of all, the set of $t\in T$ such that $S_t$ is not irreducible is closed;
furthermore, since there is only a finite number of exceptional divisors, there is only a finite number of $t$ such that the fibres $\sZ_t$ and $\Ga_t$ are not irreducible. So we omit these two finite subsets of $T$.

\end{proof}

\begin{remark}
In view of Hironaka's extension \cite{hiro-2} of the resolution results to complex spaces, one can replace the hypothesis that we have a projective family by the hypothesis that we have a proper family.

\end{remark}

\begin{theorem}\label{birational}{\bf (Openness of birationality)}

 Let $p : \sX \ra T$ be a 1-dimensional (flat) family of reduced projective subschemes of dimension $n$ (i.e., $\sX$ is irreducible
and T is a smooth connected
curve, $0 \in T$) such 
that $X_0 = p^{-1} (0)$ contains a unique  irreducible component $X_0''$ which is  not uniruled.

Let  $ f : \sX \dasharrow  \PP^N$ be a rational map 
such   that $f_0 : X_0'' \dasharrow Y'_0$ is birational to its image.

Assume moreover
 
(**) 
setting  $F : = f \times p:  \sX \dasharrow   \PP^N \times T$, letting  $\sY$ be the closure of the image of $F$, and 
letting $\Ga$ be the normalization of the graph of $F$, then 
 the fibres 
$\Ga_t$ are irreducible for $t \neq0$, while $\Ga_0$ consists of the strict transform of $X_0$ plus some uniruled components.

Then $f_t : X_t \ra Y_t$ is birational to its image for all $t$ in a neighbourhood of $ 0 \in T$.
\end{theorem}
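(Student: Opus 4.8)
The plan is to reduce everything to computing the degree $d$ of the generically finite morphism $F : \Ga \to \sY$, and to show that the hypotheses force $d = 1$. First I would record the geometry supplied by (**): since $\Ga$ is the normalization of the graph of $F$, it is an irreducible normal variety, birational to $\sX$ and proper over $T$; likewise $\sY \subset \PP^N \times T$ is irreducible and proper over $T$, with general fibre $\sY_t = Y_t$. Because $f_0$ is birational onto $Y_0'$, the morphism $F$ is generically finite onto $\sY$, of some degree $d \geq 1$, and both $\Ga \to T$ and $\sY \to T$ are flat ($T$ being a smooth curve and the total spaces having no vertical components). For $t \neq 0$ the fibre $\Ga_t$ is irreducible by (**), so $\Ga_t \to Y_t$ has a well-defined degree, equal to the generic relative degree $d$; hence it suffices to prove $d = 1$. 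Once this is known, $f_t$ is birational onto $Y_t$ for $t \neq 0$, since it agrees, up to the birational morphism $\Ga_t \to X_t$, with the degree-one map $\Ga_t \to Y_t$, while the case $t = 0$ is exactly the hypothesis on $X_0''$.

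The heart of the argument is a conservation-of-number computation carried out on the special fibre, and it is here that non-uniruledness enters decisively. Write $Y_0' := \overline{f_0(X_0'')}$; it is an $n$-dimensional component of $\sY_0$ and, being birational to $X_0''$, it is \emph{not} uniruled. By (**), $\Ga_0$ is the union of the strict transform $\widetilde{X_0}$ of $X_0$ together with uniruled components $R_1, \dots, R_k$, and among the components of $\widetilde{X_0}$ only the strict transform $\widetilde{X_0''}$ of $X_0''$ fails to be uniruled. I would then observe that the images $F(R_i)$, and the images of the uniruled components of $\widetilde{X_0}$, are all uniruled, hence none of them dominates the non-uniruled $Y_0'$; their union therefore meets $Y_0'$ in a proper closed subset. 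Consequently a general point $y_0 \in Y_0'$ lies outside it, and also outside the proper closed locus where $F$ fails to be finite.

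To count the preimages, I would cut down to a curve: choose a general complete curve $B \subset \sY$ through $y_0$ dominating $T$, normalise it to a smooth curve $\widetilde B$, and form $\Ga_{\widetilde B} := \Ga \times_{\sY} \widetilde B$. Then $\Ga_{\widetilde B} \to \widetilde B$ is proper and quasi-finite, hence finite of degree $d$, so the fibre over the point lying above $y_0$ has total length $d$; by properness this length-$d$ cycle is supported on $F^{-1}(y_0) \subset \Ga_0$. By the previous paragraph this support lies entirely on $\widetilde{X_0''}$, and since $\widetilde{X_0''} \to Y_0'$ is birational, the fibre over a general $y_0$ consists of a single reduced point. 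Therefore $d = 1$.

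The step I expect to be the main obstacle is precisely this conservation-of-number bookkeeping: one must ensure that, in the degeneration $t \to 0$, all $d$ preimages of a moving point $y_t \in Y_t$ genuinely specialise into $\Ga_0$ (this is where properness of $\Ga \to \sY$, guaranteed by working with the closure $\sY$ and the normalised graph, is essential), and that the excess $d - 1$ of them is absorbed by the uniruled components rather than escaping. The non-uniruledness of $Y_0'$ is exactly the feature preventing these absorbed preimages from sitting over a general point of $Y_0'$; the projection-from-a-point example and Example \ref{counterex} show that without it the degree can genuinely drop in the limit and the conclusion fails. Finally, having established $d = 1$, I would upgrade ``general $t$'' to ``all $t$ in a neighbourhood of $0$'' using the constancy of the relative degree over the connected punctured curve together with the irreducibility of $\Ga_t$ for $t \neq 0$ coming from (**).
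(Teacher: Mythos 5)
Your proposal is correct and follows essentially the same route as the paper's proof: both arguments show that a general point $y\in Y_0'$ has a unique preimage $x$ in $\Ga_0$ (because the extra components of $\Ga_0$ are uniruled and so cannot dominate the non-uniruled $Y_0'$), observe that $F$ is a local isomorphism at $x$ since $f_0$ is birational on $X_0''$ and everything fibres over $T$, and then run a conservation-of-number argument along a curve in $\sY$ through $y$ dominating $T$ to force the generic degree to be $1$. The only cosmetic difference is that you count length after base change to a normalized curve $\widetilde B\subset\sY$, whereas the paper lifts a local holomorphic section of $\sY\ra T$ and tracks the resulting arcs in $\Ga$.
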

\begin{proof}
Clearly $\sY$ is irreducible and it has  dimension $n+1$ since its fibre 
$Y_0$ over $0$ contains $Y'_0$ which has
dimension $n$; the fibre $Y_t$
over $t \neq 0$  contains  the image $Y'_t$ of $X_t$ which by assumption is    irreducible.

The rational map $F$ induces  a surjective morphism $F' : \Ga \ra \sY$.

$\Ga$ is irreducible of dimension $n+1$, and the central image $Y_0$ is the image
of $\Ga_0$ under a proper map, and contains $Y'_0$ as a component, since the strict transform $X'_0$ of $X_0''$ is a component of $\Ga_0$.

The other components of $\Ga_0$ are uniruled, hence they cannot dominate the component $Y'_0$,
 which is not uniruled.

 Hence the general point $y \in Y_0'$ is in the image of only one point $x$,
 this point $x $ lies in $X'_0$, and the map $F'$  is of maximal rank in $x$,
 hence a formal isomorphism with its image: because $f_0$ is a local isomorphism and $p$
 is a submersion at $x$ (in particular there is no ramification of $F'$ at $x$).
 
 Consider now a local holomorphic section $\Sigma$ of $\sY \ra T$ passing through
 $y$ (which is a smooth point of $\sY$ and of the fibre $Y_0$, since $F'$ and $p$  are local submersions at $x$).
 
 If the map $f_t$ were non birational for all $t$, then $f'_t : X'_t \ra Y'_t$
would have  positive degree, and would be  \'etale outside of a branch locus $\sB_t \subset Y'_t$.
 
 We have seen  that if $y \in Y_0'$ is chosen general,  it is not contained in the closure
  $\sB$ of the branch loci: since there is no ramification at $x$.
  
  Therefore the inverse image of $\Sigma$ consists of holomorphic arcs,
  in a number strictly greater  than one,  
  of which only one contains $x$ in its closure, while the other arcs
  tend to a point $z$  in $\Ga_0$ different from $x$. 
  
 The conclusion is that $F'(z)=y, z \neq x$, and we have
  reached the desired contradiction: hence we have proven that $f_t$ is birational.
   
 \end{proof}
 
 \begin{remark}
 
 The above Theorem and the following ones can be stated in more general situations.

(i)   We can consider more generally \footnote{Thanks to Thomas Peternell for asking this question.}
 a family $\sX \ra T$ of compact complex spaces, and  a meromorphic
map $ f : \sX \dasharrow  M$, where $M$ is a complex manifold: the    above proof   works without any change.

(ii) The same theorem is true for a projective family  over an algebraically closed  field of any  characteristic, if we assume that $f_0$ is separable and birational on $X_0''$.

We have in fact  that $ F' : \Ga \ra \sY$ proper, hence there is a closed set $B \subset \sY$ with nontrivial complement
$ \sY \setminus B$, such that, over $ \sY \setminus B$, $F'$  is finite with all the fibres of cardinality $d$.

If $f_t$ is not birational, then $ d \geq 2$. Since we have shown that $y \notin B$, and that $F'^{-1} (y)$
is a single point with multiplicity $1$, it follows then that $d=1$, a contradiction.

 (ii) The theorem was applied as self evident in the case of canonical maps of algebraic surfaces in \cite{sing-bidouble},  but its use  was criticized as non self evident in \cite{liedtke}.  All details of the proof are now following from Proposition \ref{smooth}, (v), applied to the family of smooth minimal models of surfaces of general type,  and 
 from Theorem \ref{birational}. 
  
 \end{remark} 
 
   Before moving to a more general Theorem, we need to state a simple group theoretical result.

\begin{lemma}\label{comp-mon}
Given finite groups $ \Pi_X < \Pi' <  M_0$, where the maximal normal subgroup of $M_0$
contained in  $\Pi_X$ is the identity, let us  set: 

(1) $M_0^{\nu} : = M_0 /K$, where
$K$ is the maximal normal subgroup contained in $\Pi'$, so that 

(2) 
 $M_0$ acts faithfully on the coset space $ \sF_X : = M_0 / \Pi_X$, whose cardinality will be denoted by $d$, 
 
 (3)  $M_0^{\nu}$  acts
on the coset space $ \sF^{\nu} := M_0 / \Pi'$, whose cardinality will be denoted by $m$,

(4) 
$M_t : = \Pi' / K_t$ acts on $\sF_t: = \Pi' / \Pi_X$, where
$K_t$ is the largest normal subgroup of $\Pi'$ contained in $\Pi_X$.

Then $ d = \de m$, where $\de$ is the cardinality of the set $ \sF_t  = \Pi' / \Pi_X$.

And  the action of $M_0$ preserves the blocks corresponding
to the $m$ elements of $\sF^{\nu}$, 
$$ \sF_X = \cup_{[c] \in \sF^{\nu}} \ c  \Pi' / \Pi_X.$$

Hence we have exact sequences
$$ 1 \ra K \ra M_0 \ra M_0^{\nu} \ra 1,$$
$$ 1 \ra  K_t  \ra \Pi' \ra M_t \ra 1,$$
and, setting $G : = \Pi' / K$, 

$$ G < M_0^{\nu} .$$

\end{lemma}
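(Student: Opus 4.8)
The plan is to observe that, apart from one elementary index count, every assertion is a direct consequence of the standard description of the kernel of a permutation action on a coset space; so the proof reduces to bookkeeping with cores (largest normal subgroups contained in a given subgroup). First I would recall the basic fact that, for a finite group $G$ and a subgroup $H < G$, the action of $G$ on $G/H$ has kernel $\bigcap_{g\in G} gHg^{-1}$, the largest normal subgroup of $G$ contained in $H$. Applying this three times establishes at once that the three actions in (2), (3), (4) are faithful: $M_0$ acts faithfully on $\sF_X = M_0/\Pi_X$ by the hypothesis that the maximal normal subgroup of $M_0$ contained in $\Pi_X$ is trivial; $M_0^\nu = M_0/K$ acts faithfully on $\sF^\nu = M_0/\Pi'$ because $K$ is by definition the largest normal subgroup of $M_0$ contained in $\Pi'$, i.e. exactly the kernel of $M_0 \ra \mathrm{Sym}(\sF^\nu)$; and $M_t = \Pi'/K_t$ acts faithfully on $\sF_t = \Pi'/\Pi_X$ since $K_t$ is the largest normal subgroup of $\Pi'$ contained in $\Pi_X$. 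The equality $d = \de m$ is then the tower law for indices applied to $\Pi_X < \Pi' < M_0$, namely $d = [M_0 : \Pi_X] = [M_0 : \Pi']\cdot[\Pi' : \Pi_X] = m\,\de$, where $m = |\sF^\nu| = [M_0 : \Pi']$ and $\de = |\sF_t| = [\Pi' : \Pi_X]$.

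For the block decomposition I would introduce the natural projection
$$ \pi : \sF_X = M_0/\Pi_X \ra M_0/\Pi' = \sF^\nu, \qquad g\Pi_X \mapsto g\Pi', $$
which is well defined because $\Pi_X \subset \Pi'$, and is manifestly $M_0$-equivariant for the left translation actions. Its fibre over a class $[c] \in \sF^\nu$ is precisely the set of $\Pi_X$-cosets contained in the $\Pi'$-coset $c\Pi'$, that is the block $c\Pi'/\Pi_X$; this set depends only on the coset $c\Pi'$, not on the representative $c$, and has cardinality $\de$. Hence $\sF_X = \bigcup_{[c]\in\sF^\nu} c\Pi'/\Pi_X$ is a partition into $m$ blocks of size $\de$ (re-deriving $d = \de m$), and equivariance of $\pi$ says exactly that $M_0$ permutes these blocks, the induced action on the set of blocks being the action of $M_0$ on $\sF^\nu$. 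This is the preservation of the block system asserted in the statement.

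Finally, the two exact sequences are simply the definitions of the quotient groups $M_0^\nu = M_0/K$ and $M_t = \Pi'/K_t$, together with the normality of $K$ in $M_0$ and of $K_t$ in $\Pi'$ (both guaranteed, since a core is always normal in the ambient group). For the inclusion $G < M_0^\nu$, I would note that $K$ is normal in $M_0$ and satisfies $K \subset \Pi' \subset M_0$; therefore $K$ is a fortiori normal in $\Pi'$, and $\Pi'/K$ embeds as the subgroup $G = \Pi'/K$ of $M_0/K = M_0^\nu$.

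The only point requiring any care — the \emph{main obstacle}, such as it is — is to keep the three distinct cores straight (the core of $\Pi_X$ in $M_0$, the core $K$ of $\Pi'$ in $M_0$, and the core $K_t$ of $\Pi_X$ in $\Pi'$), and to verify that the block attached to $[c]$ genuinely depends only on $c\Pi'$ and not on the chosen representative $c$. Everything else is the tower law for indices and the elementary theory of imprimitivity systems for transitive actions.
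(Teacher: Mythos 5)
Your proof is correct, and it is exactly the standard core/imprimitivity argument the author has in mind: the paper in fact states Lemma \ref{comp-mon} without any proof (introducing it only as ``a simple group theoretical result''), so there is no alternative argument to compare against. Your three applications of the fact that the kernel of the action on $M_0/H$ is the core of $H$, the tower law $[M_0:\Pi_X]=[M_0:\Pi'][\Pi':\Pi_X]$, and the $M_0$-equivariant projection $M_0/\Pi_X \ra M_0/\Pi'$ giving the block system are precisely the intended justifications, and the only point of care you flag (keeping the three cores distinct and checking the block depends only on $c\Pi'$) is handled correctly.
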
.

With a similar proof to Theorem \ref{birational}, we obtain the following more  general result which is  useful
 for applications.
 
 \begin{theorem}\label{blocks}
 
  Let $p : \sX \ra T$ be a 1-dimensional (flat) family of  projective varieties  of dimension $n$, with $\sX$  irreducible,  T  a smooth connected
curve, $0 \in T$,  such 
that $X_0 = p^{-1} (0)$ is irreducible normal.

Let  $ f : \sX \dasharrow  \PP^N$ be a rational map 
such   that $f_0 : X_0 \dasharrow Y'_0$ is of degree $d$  to its image $Y'_0$,
which is not uniruled.

Assume moreover
 
(**) 
setting  $F : = f \times p:  \sX \dasharrow   \PP^N \times T$, letting  $\sY$ be the closure of the image of $F$, and 
letting $\Ga$ be the normalization of the graph of $F$, then 
 the fibres 
$\Ga_t$ are irreducible for $t \neq0$, while $\Ga_0$ consists of the strict transform of $X_0$ plus some uniruled components.

 Then

(i) $f_t : X_t \ra Y_t$ has degree $\de$   onto its image for all $t$ in a neighbourhood of $ 0 \in T$, 
with $\de$ dividing $d$, so that  we may write  $ d = m \de$.

More precisely,  $f_0 : X_0 \dasharrow Y'_0$ admits a factorization as $\nu_0  \circ  F''_0$, 
where $\nu_0$ has degree $m$, and the monodromy group  $M_0 \subset \frak S_d$ of $f_0$
is thus related to  the monodromy group of $f_t$, 
$M_t \subset \frak S_{\de}$,
and the  monodromy  group $M^{\nu}_0 \subset \frak S_{m}$ of $\nu_0$, as in 
the statement of Lemma \ref{comp-mon}.

(ii) In particular, if the monodromy group $M_0 \subset \frak S_d$ is primitive (that is,
there is no nontrivial partition of $\{1, \dots, d\}$ which is $M_0$-invariant)
then either the general $f_t$ is birational ($m=d$) or it has degree $ \de = d$ ($m=1$).
\end{theorem}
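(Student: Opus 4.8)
The plan is to follow the strategy of Theorem \ref{birational}, upgrading the single-preimage analysis carried out there to a \emph{block} analysis governed by Lemma \ref{comp-mon}. As in that proof I would form the proper surjective morphism $F' : \Ga \to \sY$ with $\Ga$ irreducible of dimension $n+1$, observe that $\de := \deg(F')$ equals the degree of the general $f_t$ (because $\Ga_t$ is birational to $X_t$ for general $t$), and record that the restriction to the distinguished component $X'_0$ of $\Ga_0$, namely $F'|_{X'_0} : X'_0 \to Y'_0$, has degree $d$. The essential feature is that $\de$ and $d$ need not coincide: a general point $y \in Y'_0$ has exactly $d$ preimages in $X'_0$ (and none on the uniruled components, which cannot dominate the non-uniruled $Y'_0$), whereas the generic fibre of $F'$ has only $\de$ points. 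Hence $\sY$ must acquire several local analytic branches along $Y'_0$, and the content of (i) is precisely to measure this discrepancy.

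The key step is to organize the $d$ preimages $x_1,\dots,x_d$ of a general $y\in Y'_0$ into blocks, which I would do by factoring $F'$ through the normalization $\nu : \widetilde{\sY}\to\sY$. Since $\Ga$ is normal and $F'$ dominant, $F'$ factors as $\Ga \xrightarrow{G} \widetilde{\sY}\xrightarrow{\nu}\sY$ with $\deg G=\de$ and $\nu$ birational. Over a general $y$ the fibre $\nu^{-1}(y)$ consists of $m$ reduced points, the local branches of $\sY$ at $y$, and over each of them $G$ is unramified with exactly $\de$ preimages; thus $d=m\de$ and each branch collects precisely $\de$ of the $x_i$. Restricting to the central fibres yields the asserted factorization $f_0=\nu_0\circ F''_0$, where $F''_0:X_0\to \widetilde{Y}'_0$ has degree $\de$ and $\nu_0:\widetilde{Y}'_0\to Y'_0$ has degree $m$. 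This is the geometric avatar of the arc argument of Theorem \ref{birational}: a local section $\Sigma$ of $\sY\to T$ through $y$ selects one branch, the $\de$ arcs composing $F'^{-1}(\Sigma)$ limit to the $\de$ points of the corresponding block, while the remaining $x_i$ appear as isolated points of the central fibre lying on the other branches.

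Next I would identify this partition with a system of imprimitivity for the monodromy. The blocks are the fibres of $F''_0:X_0\to\widetilde Y'_0$, so the monodromy group $M_0\subset\frak S_d$ of $f_0$ preserves them as $y$ varies over the locus of $Y'_0$ where $f_0$ is \'etale; taking $\Pi_X$ to be the stabilizer of $x_1$ and $\Pi'$ the stabilizer of its block gives $[M_0:\Pi_X]=d$, $[M_0:\Pi']=m$, $[\Pi':\Pi_X]=\de$. The monodromy of the general $f_t$ acts on a single block of size $\de$ and is the group $M_t=\Pi'/K_t$ of Lemma \ref{comp-mon}, while $\nu_0$ contributes $M_0^\nu$; applying the lemma yields (i). For (ii), if $M_0$ is primitive the only invariant partitions are the two trivial ones, forcing either $m=d,\ \de=1$ (so $f_t$ is birational) or $m=1,\ \de=d$.

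The main obstacle I anticipate is the \emph{global coherence} of the block decomposition: one must show that the local branches of $\sY$ along $Y'_0$ assemble into a genuine degree-$m$ covering $\widetilde Y'_0\to Y'_0$, equivalently that the partition of the general fibre is locally constant and $M_0$-invariant rather than varying erratically with $y$. This is where properness of $F'$, the normality of $X_0$, and hypothesis (**) must be combined, the latter guaranteeing that the extra components of $\Ga_0$ are uniruled and hence invisible over a general $y\in Y'_0$. Once the factorization through $\widetilde{\sY}$ is in place and its restriction to the central fibre is shown to be flat over the generic point of $Y'_0$, the equal block size $\de$ and the count $d=m\de$ follow, and the purely group-theoretic bookkeeping is supplied by Lemma \ref{comp-mon}.
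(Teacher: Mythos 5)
Your proposal is correct and follows essentially the same route as the paper: pass to the normalization of $\sY$, factor $F'$ through it to group the $d$ preimages of a general $y\in Y'_0$ into $m$ blocks of size $\de$, run the local-section (arc) argument of Theorem \ref{birational} branch by branch to get $\deg f_t=\de$, and feed the resulting system of imprimitivity into Lemma \ref{comp-mon}. The only difference is cosmetic (you define $\de$ as $\deg F'$ and then identify the block size, whereas the paper defines $\de$ as the degree of $F''|_{X'_0}$ and then transfers it to $f_t$).
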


\begin{proof}
Using the same notation as in the proof of Theorem \ref{birational}, 
we are then in a similar  situation.

 The general point $y \in Y_0$ is in the image of exactly $d$ smooth points $x_1, \dots, x_d$ of $\Ga$
 which  lie in $X'_0$, and the map $F'$  is of maximal rank in each $x_i$.
 
 What may now change is that $y$ could be contained in the singular locus
 of $\sY$, and there may be $m$  smooth branches of $\sY$ passing through $y$.
 
 Therefore, we take the normalization $\nu : \sY^n \ra \sY$, and notice that
 we have a factorization of $F'$ as $\nu \circ F''$, where $F'' : \Ga \ra \sY^n$.
 We observe then that the morphism $F'' | X'_0$ will have degree $\de$ onto its image,
 where $ d = \de m$.
 
 Hence the $d$ points are grouped in $m$ subsets, corresponding to the inverse images of
 the points $y_1, \dots, y_m$ lying over $y$ in  $\sY^n$, and the previous argument 
 using the  local holomorphic sections $\Ga_i$  of $\sY^n  \ra T$ passing through
 $y_i$ for $i=1, \dots , m$ shows
 that the degree of $f_t : X_t \ra Y_t$ equals $\de$.
 
 Assertion ii) follows right away because, if $ m \neq 1,d$,  then there is a partition of  
  $\{1, \dots, d\}$ in $m$ subsets which
are permuted by $M_0$.
 
Now, the monodromy of $f_t : X_t \dasharrow Y_t$ will be the same as the one of 
$F'' : X'_0 \ra Y^n_0$, and since  $f_0 : X'_0 \dasharrow Y_0$ is a composition, it follows that
the monodromy of $f_0$ is  as claimed, in view of the previous  Lemma \ref{comp-mon},
where we divide the respective fundamental groups by the largest normal subgroup
of the fundamental group of the open set of $X_0'$ where all coverings are
unramified, so that $M_0$ and $M_t$ are the monodromy groups we are talking about.

  
\end{proof}

\begin{corollary}
Let $p : \sX \ra T$ be a  family of projective varieties of dimension $n$, where 
T is  smooth and connected. Assume moreover  that we are given  a rational map $ f : \sX \dasharrow  \PP^N$ which is a morphism for $t \in V$, where $V$ is  an open set
$V \subset T$.

(I)  Assume that for a general point $t \in T$ there are several 1-parameter specializations,
for $j =1, \dots, r$,
with base $T_j$ containing  $t$   and $t_j  \in T$, 
of the fibre $X_{t} = p^{-1} (t)$ to  the fibre $X_{t_j} $. Assume that these are,  as in Theorem \ref{blocks}, such 
that  $X_{t_j} $ is irreducible and normal with monodromy in $\frak S_{d(t_j)}$, and that moreover,
writing  $d_j : = d(t_j)  $,  we have  
$$GCD \{ d_j | j =1, \dots, r\} =1.$$

 Then, for general $t$,   $f_{t}$ is birational.  
 \smallskip
 
 (II) The same conclusion holds if there are two 1-parameter specializations, one 
 such that the monodromy group $M_0 \subset \frak S_{d_1}$ is primitive,
 the other such that $d_1$ does not divide $d_2$.
\end{corollary}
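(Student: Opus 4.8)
The plan is to reduce the whole statement to Theorem \ref{blocks} by isolating a single numerical invariant of the family. First I would observe that, since $T$ is smooth and connected, hence irreducible, the degree of $f_t$ onto its image is constant, equal to some integer $\de$, for all $t$ in a dense open subset $U \subset T$; this is the generic degree of $f$ along the fibres of $p$. The entire assertion then amounts to proving that $\de = 1$, i.e. that the general $f_t$ is birational. I would fix a general point $t \in U$ lying on all the given bases $T_1, \dots, T_r$ (legitimate, since by hypothesis all the $T_j$ pass through a common general $t$), chosen moreover to avoid the finitely many degeneracy loci attached to each $T_j$.

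For part (I), I would apply Theorem \ref{blocks} to each of the $r$ one-parameter families obtained by restricting $p$ and $f$ over $T_j$, taking $t_j$ as the special point (in the role of $0$) and $t$ as the general point. By assumption $X_{t_j}$ is irreducible and normal, the map has degree $d_j$ there, its image is not uniruled, and the hypothesis (**) holds; so Theorem \ref{blocks}(i) tells us that the generic degree along $T_j$ divides $d_j$. Since $t$ was chosen general in $T$, that generic degree is exactly $\de$, whence $\de \mid d_j$ for every $j$. Consequently $\de$ divides $\gcd\{d_1, \dots, d_r\} = 1$, forcing $\de = 1$, so the general $f_t$ is birational.

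For part (II) I would instead invoke the refinement Theorem \ref{blocks}(ii). Applied to the specialization whose special-fibre monodromy $M_0 \subset \frak S_{d_1}$ is primitive, it yields the dichotomy: either $\de = 1$ (and we are done), or $\de = d_1$. In the latter case I would bring in the second specialization, which by Theorem \ref{blocks}(i) forces $\de \mid d_2$, i.e. $d_1 \mid d_2$, contradicting the hypothesis that $d_1$ does not divide $d_2$. Hence the second case is excluded and $\de = 1$, so again the general $f_t$ is birational.

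The only genuinely delicate point, and the step I would treat with care, is the identification of the generic degree \emph{seen} by each separate specialization with the single global invariant $\de$: one must pick the base point $t$ outside the union of the proper closed degeneracy loci coming from all $r$ specializations at once, and verify that along each $T_j$ the fibre $X_t$ is a truly general fibre of $p$, so that its map-degree equals $\de$ rather than some intermediate value. Everything else is the purely arithmetic combination of the divisibility relations $\de \mid d_j$ (together with the primitivity dichotomy in part (II)), which is immediate once Theorem \ref{blocks} is in force.
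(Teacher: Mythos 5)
Your proposal is correct and follows essentially the same route as the paper: the generic degree $\de$ is shown to divide each $d_j$ via Theorem \ref{blocks}(i), giving $\de \mid \gcd\{d_j\} = 1$ for part (I), and the primitivity dichotomy of Theorem \ref{blocks}(ii) combined with $\de \mid d_2$ handles part (II). The only difference is that you make explicit the (correct, and worth noting) point that the degree seen along each specialization base $T_j$ must be identified with the single global generic degree $\de$, which the paper treats as implicit.
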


\begin{proof}

We denote as above by $\de$ the degree of $f_{t}$ for general $t$.

Our claim(I),   in the notation 
of Theorem \ref{blocks}, is that $\de=1$, which is obvious
since, by (i) of theorem \ref{blocks},  $ d_j  = \de m(t_j)$ therefore  $\de$ divides all the integers $d_j$, 
hence their GCD.

To show (II), simply apply (i) and (ii) of Theorem \ref{blocks}: in fact the general degree $\de$ is
either $1$ or $d_1$ by virtue of (ii), while (i) implies that $\de | d_2$. 

\end{proof}

\begin{remark}
One can obtain other more complicated criteria using the above exact sequences of groups.

But, if $M_0$ is Abelian, then $\Pi_X=0$, $K = \Pi'$, $K_t=0$,  hence $M_t = \Pi'$.

 If all specializations found yield a group $M_{t_0(j)}$ which is Abelian, then
 a criterion of triviality of $M_t$ follows  from a  criterion similar
to the above Corollary, analyzing the primary decompositions of all the
groups $M_{t_0(j)}$.

If we get one specialization such that one $M_{t_0}$ is Abelian,
then $M_t$ is Abelian, and is, for any other specialization,  a quotient of the Abelianization of $\Pi'$
by the image of $\Pi_X$.

\end{remark}

\begin{remark}
The main conjecture raised in \cite{Cat-Ces} is that the canonical map of a general pair
$(A,X)$ of an ample hypersurface in an Abelian variety is an embedding if the Pfaffian of the Polarization given by $X$
is at least $ dim(X) + 2$.

Also for this purpose it would be useful to establish in a similar way some criteria guaranteeing  
`general embedding', that is, embedding for  a
general variety in a family.
\end{remark}

\section{Theta divisors of Hyperelliptic curves}

We begin with a quite  elementary result in group theory.

\begin{lemma}\label{subgroups}
Consider the {\bf Group $G$ of the Hypercube}, namely the  
natural semidirect product (induced by coordinates permutation)
$$ G : = (\ZZ/2)^n \rtimes \frak S_n = : K \rtimes H.$$

Then

(i)  the only intermediate subgroups $H'$, with $ H< H' <G$,
and different from $H$, $G$, are just two subgroups $H_1, H_2$, of respective indices
$$ [H_1 : H] =2,  [G : H_2]=2.$$

(ii) the largest subgroup $H'' < H$ which is normal in $G$ is the identity subgroup.

\end{lemma}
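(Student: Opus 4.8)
The plan is to first reduce part (i) to a problem about $\frak S_n$-stable subgroups of $K$. Since $G = KH$ with $K \cap H = \{1\}$, any subgroup $H'$ with $H \le H' \le G$ is completely determined by $L := H' \cap K$: writing an element $g = kh$ with $k \in K$, $h \in H$, the inclusion $H \le H'$ forces $k = g h^{-1} \in H' \cap K = L$, so that $H' = L \rtimes H$. Moreover $L = H' \cap K$ is automatically $H$-stable, since $H \le H'$ normalizes both $K$ and $H'$. Conversely, writing $K$ additively and recalling that $(\ell_1, h_1)(\ell_2, h_2) = (\ell_1 + h_1 \cdot \ell_2, h_1 h_2)$, one checks that $L \rtimes H$ is a subgroup of $G$ if and only if $L$ is stable under the coordinate permutation action of $H = \frak S_n$. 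In other words, intermediate subgroups $H < H' < G$ correspond bijectively to nonzero proper $\frak S_n$-submodules $L$ of the $\F_2$-vector space $K = \F_2^n$.

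Next I would classify these submodules by an elementary ``derivative'' argument. Let $0 \neq L \subseteq K$ be $\frak S_n$-stable and suppose it contains a non-constant vector $w$, say with $w_i \neq w_j$. For the transposition $\tau = (i\,j)$ one has $w + \tau\cdot w = e_i + e_j$, so $e_i + e_j \in L$; transitivity of $\frak S_n$ on pairs then yields $e_k + e_l \in L$ for all $k,l$, and these span the sum-zero hyperplane $U_0 := \{v : \sum_i v_i = 0\}$. Hence $U_0 \subseteq L$, and since $\dim U_0 = n-1$ we get either $L = U_0$ or $L = K$. If instead $L$ contains no non-constant vector, then $L = \langle \mathbf 1\rangle$, the line spanned by $\mathbf 1 = (1, \dots, 1)$. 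Thus for $n \ge 3$ (so that $\langle \mathbf 1\rangle \subsetneq U_0 \subsetneq K$ are genuinely distinct) the only proper nonzero submodules are $\langle \mathbf 1\rangle$ and $U_0$, giving exactly the two intermediate subgroups $H_1 = \langle \mathbf 1\rangle \rtimes H$ and $H_2 = U_0 \rtimes H$, with $[H_1 : H] = |\langle \mathbf 1\rangle| = 2$ and $[G : H_2] = [K : U_0] = 2$. This proves (i).

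For part (ii), suppose $H'' \le H$ is normal in $G$ and take $(0, h) \in H''$. Conjugating by an arbitrary $(k, 1) \in K$ and using $-k = k$ in characteristic $2$ gives $(k,1)(0,h)(k,1)^{-1} = (k + h\cdot k, h)$. Normality forces this element to lie in $H'' \le H$, i.e. $k + h\cdot k = 0$, equivalently $h \cdot k = k$, for every $k \in K$. Taking $k = e_i$ yields $e_{h(i)} = e_i$, so $h$ fixes every index and $h = \mathrm{id}$; hence $H'' = \{1\}$, as claimed.

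The only genuinely nontrivial step is the classification of the $\frak S_n$-submodules of $\F_2^n$ in the second paragraph, where the transposition/derivative trick is the crux; the one point requiring care is the hypothesis $n \ge 3$, without which $\langle \mathbf 1\rangle$ and $U_0$ coincide (for $n = 2$) and the count of exactly two intermediate subgroups fails. Everything else is formal manipulation inside the semidirect product.
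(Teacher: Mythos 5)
Your proof is correct and follows essentially the same route as the paper: both arguments reduce (i) to classifying the $\frak S_n$-invariant subspaces of $K = \F_2^n$ (the paper via a minimal-weight vector, you via differencing a non-constant vector by a transposition --- the same computation in different clothing, yielding $\{0\}$, $\langle e\rangle$, $e^{\perp}$, $K$), and both prove (ii) by the identical conjugation calculation $(k,1)(0,h)(k,1)^{-1}=(k+h\cdot k,h)$. Your explicit caveat that $n\ge 3$ is needed for the two intermediate subgroups to be distinct is a small point the paper leaves implicit, and is harmless in the applications, where $n=g-1\ge 3$.
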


\begin{proof}
For $v \in K$, $\s \in H$, we write $\s(v) : = \s v \s^{-1}$. 

For instance,
$\s(e_i) = e_{\s(i)}$.

If $H'$ is as in (i) and $H' \neq H$, then $H' \cap K =:V$ is then an $H$-invariant subspace.
And conversely, if $V$ is $H$-invariant, then $ V H$ is a subgroup,
because  $$v_1 \s v_2 \tau = v_1 \s(v_2) \s \tau.$$

Then assertion (i)  follows  from the

{\bf Claim:} {\em The only $\frak S_n$-invariant subspaces of $K$ are :}
$$ \{0\}, K , (\ZZ/2) e , e^{\perp}, {\rm where } \ \ e : = \sum _1^n e_i.$$

{\em Proof of the claim:} it is obvious that the four above subspaces are invariant.

For such an invariant subspace $V$, assuming that $V \neq 0$, consider a vector $v$ of minimal weight
$ w(v) : = | \{ i | v_i \neq 0\}|$. Denote by $w$ the minimal weight: if $w=1$, then
$\exists i $ such that $e_i \in V \Rightarrow V = K$.

Otherwise, we may assume, after a basis change,  that $ v = e_1 + \dots + e_w$.

If $ w = n$, we get that $ V = (\ZZ/2) e$. If instead $w < n$, then there is a $\s$
such that $\s(v) =  e_2 + \dots + e_{w+1}$, hence $ v  + \s(v) = e_1 + e_{w+1}$,
hence $w=2$. 

Then $e_1 + e_2 , e_2 + e_3 , \dots, e_{n-1} + e_n \in V$, hence $V$ is
an invariant hyperplane. Hence $V$ is orthogonal to a projectively invariant vector,
and we see that $V = e^{\perp}$.

Passing to (ii), 
$$H'' = \cap _{v \in K} v H v^{-1} = \cap _{v \in K} [ H \cap v H v^{-1} ].$$

Now,  
$$\s \in H \cap v H v^{-1} \Leftrightarrow \exists \tau \in H, \s = v \tau v^{-1} 
\Leftrightarrow \exists \tau \in H,\s \tau^{-1} = v \tau (v^{-1}).$$

 Since the last equality is between an element of $H$ and one of $K$,  this means that $\s = \tau$ and $ v = \s (v)$; if this is to hold for each $v \in K$, then $\s$
is the identity.

\end{proof}

We now come to an important  geometrical occurrence of the group $G$ of the Hypercube.

Let $C$ be a Hyperelliptic curve of genus $g$, and let $\psi: C \ra \PP^1$ be 
the canonical double cover (such that the canonical map $\phi$ of $C$
is the composition $\phi = v_{g-1} \circ \psi$, where $v_{g-1}  : \PP^1 \ra \PP^{g-1}$
is the Veronese embedding of $\PP^1$ as a rational normal curve of degree $g-1$).

Then, setting $Y : = C^n$, the group $G$ acts on $Y= C^n$, and we have 
 the following commutative diagrams:

	\[
\begin{tikzcd}
	C^n = Y \arrow{d}{p}\arrow{r}{\Psi} & Y /K = C^n/ (\ZZ/2)^n = (\PP^1)^n \arrow{d}{\pi}\\
	C^{(n)} : = C^n/ \frak S_n  = Y/ H \arrow{r}{\nu}& Y/G  = (\PP^1)^n /  \frak S_n = \PP^n
\end{tikzcd}
\]

It is well known that for $n=g$ we have Jacobi inversion, that is, $C^{(g)} $ has a
surjective birational morphism (the Abel Jacobi map) to the Jacobi variety $Jac(C) \cong Pic^g(C)$, while for
$n=g-1$, again via the Abel Jacobi map, $C^{(g-1)} $ has a  birational morphism onto
the Theta divisor $\Theta_C \subset Jac(C)$. We shall need to make these birational  statements
more precise.

We have the following classical result, due to Andreotti \cite{andreotti}.

\begin{theorem}\label{andreotti}
If $C$ is a Hyperelliptic curve, then $\nu  : C^{(g-1)}  \ra \PP^{g-1} $
is the composition of the birational Abel-Jacobi map
$\al_{g-1} : C^{(g-1)}  \ra \Theta_C$ with 
the Gauss map $\mu_C$ of $\Theta_C$, and $ p  :  C^{g-1}  \ra C^{(g-1)} $ yields  the Galois closure of the Gauss map.

For a non hyperelliptic curve $C$, letting $\phi$ be the canonical map $\phi : C \ra  \PP^{g-1} $, the composition $\mu_C \circ \al_{g-1}$ is the $g-1$ secant map of $\phi(C)$,
and the branch locus of the Gauss map is the dual variety $\phi (C)^{\vee}$ of the canonical curve in  $(\PP^{g-1})^{\vee} $.

The monodromy group of the Gauss map equals the monodromy group of the
canonical curve $\phi (C)$, the symmetric group $\frak S_{2g-2}$.
\end{theorem}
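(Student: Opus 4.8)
The plan is to prove Theorem \ref{andreotti} by treating the hyperelliptic and non-hyperelliptic cases somewhat in parallel, since both rest on interpreting the Gauss map of $\Theta_C$ in terms of secant/tangent geometry of the canonical image via the Abel-Jacobi map. The unifying principle is that the tangent space to $\Theta_C$ at a general point, pulled back through $\al_{g-1}$, is determined by a geometric Riemann-Roch computation, so that the Gauss map records a hyperplane spanned by the image points of the effective divisor parametrizing that point of $\Theta_C$.

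\medskip

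\emph{First}, I would set up geometric Riemann-Roch. A general point of $\Theta_C$ corresponds (via $\al_{g-1}$) to an effective divisor $D = P_1 + \dots + P_{g-1}$ of degree $g-1$ with $h^0(D)=1$. The differential of the Abel-Jacobi map at $D$ identifies the tangent space $T_D C^{(g-1)}$ with $H^0(K_C)^\vee$ modulo the subspace of forms vanishing on $D$; concretely, the image hyperplane under the Gauss map $\mu_C$ is the hyperplane in $(\PP^{g-1})^\vee$ cut out by the condition of passing through $\phi(P_1), \dots, \phi(P_{g-1})$. For a \emph{non-hyperelliptic} $C$ the canonical map $\phi$ is an embedding, and $g-1$ general points of $\phi(C)$ span a unique hyperplane; hence $\mu_C \circ \al_{g-1}$ sends $D$ to the $(g-2)$-plane spanned by $\phi(P_1), \dots, \phi(P_{g-1})$, which is exactly the $(g-1)$-secant map of $\phi(C)$. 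The branch locus of the Gauss map is then where this spanning degenerates, i.e.\ where the hyperplane becomes tangent, giving the dual variety $\phi(C)^\vee$.

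\medskip

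\emph{Second}, for the monodromy statement in the non-hyperelliptic case, I would identify the monodromy of the Gauss map with the monodromy of the $(g-1)$-secant (equivalently, the monodromy of the covering that assigns to a general hyperplane the $2g-2$ points of $\phi(C)\cap H$, via duality between the secant description and the intersection description). A general hyperplane $H \in (\PP^{g-1})^\vee$ meets $\phi(C)$ in $\deg \phi(C) = 2g-2$ points, and the monodromy group of this covering is a subgroup of $\frak S_{2g-2}$. To show it is all of $\frak S_{2g-2}$ I would use the classical uniform monodromy criterion: the monodromy is transitive (since $\phi(C)$ is irreducible) and contains a simple transposition (coming from a simple tangent hyperplane, a generic point of the dual variety, where exactly two of the intersection points collide), and a transitive subgroup of $\frak S_m$ containing a transposition and generated by such, being $2$-transitive here, must be the full symmetric group. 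The $2$-transitivity follows from the irreducibility and reducedness of the relevant secant correspondence.

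\medskip

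\emph{Third}, the genuinely hyperelliptic case — the main point of the theorem and, I expect, the main obstacle — requires extra care precisely because $\phi = v_{g-1}\circ\psi$ is \emph{not} an embedding but factors through the double cover $\psi: C \to \PP^1$ followed by the rational normal curve. Here the canonical image is the rational normal curve of degree $g-1$, and the geometric Riemann-Roch analysis shows that the hyperplane attached to a general $D$ depends only on $\psi(P_1), \dots, \psi(P_{g-1}) \in \PP^1$, i.e.\ on $g-1$ general points of $\PP^1$, which span a hyperplane in $\PP^{g-1}$ under the Veronese $v_{g-1}$. This is exactly the map $\nu$ in the commutative diagram: $C^{(g-1)} \to \PP^{g-1}$ identifying $\nu$ with $\mu_C \circ \al_{g-1}$. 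The assertion that $p: C^{g-1}\to C^{(g-1)}$ yields the Galois closure of the Gauss map is then a group-theoretic statement about the covering $\nu$, whose fibers are acted on by $G = (\ZZ/2)^{g-1}\rtimes \frak S_{g-1}$ (permuting the $g-1$ preimages and independently flipping each under the hyperelliptic involution); I would verify using Lemma \ref{subgroups}(ii) that the stabilizer contains no nontrivial normal subgroup of $G$, so that $G$ is exactly the Galois/monodromy group and $C^{g-1}\to\PP^{g-1}$ is the Galois closure. The delicate step is making the differential-of-Abel-Jacobi computation rigorous at a \emph{general} point while controlling exactly which data (the unordered $\psi$-images) the Gauss map remembers, since the two-to-one nature of $\psi$ is what distinguishes this case from the non-hyperelliptic one and is responsible for the Galois group being $G$ rather than a symmetric group.
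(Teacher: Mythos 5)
Your proposal is correct and follows essentially the same route as the paper: both identify $\mu_C\circ\al_{g-1}(D)$ with the hyperplane spanned by $\phi(P_1),\dots,\phi(P_{g-1})$ using that $\phi$ is the projective derivative of the Abel--Jacobi map, deduce the hyperelliptic factorization through $\nu$ from $\phi=v_{g-1}\circ\psi$, and obtain the Galois-closure claim from Lemma \ref{subgroups}(ii). The only divergence is that you sketch the uniform-position/transposition argument for the monodromy being $\frak S_{2g-2}$ in the non-hyperelliptic case, a step the paper simply cites from \cite{andreotti} and \cite{acgh}.
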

\begin{proof}
As shown by Andreotti, the map $\pi \circ \Psi$ is given as follows:
$$ (P_1, \dots, P_{g-1}) \mapsto \phi(P_1) \wedge \dots \wedge \phi (P_{g-1}) \in  (\PP^{g-1})^{\vee} ,$$
where $\phi$ is the canonical map $\phi : C \ra  \PP^{g-1} $, which is indeed the projective derivative $D(\phi)$ of the Albanese map= first Abel Jacobi map $\al : C \ra Jac(C)$.

On the other hand, the Gauss map associates to a point $x \in \Theta_C$,
$$ x = \al_{g-1} ( P_1 + \dots +  P_{g-1}) = \al ( P_1)  + \dots +  \al (P_{g-1})$$
 the Hyperplane spanned by $ \phi(P_1) ,  \dots ,  \phi (P_{g-1})$
 since $\phi$ is the projective derivative of $\al$.
 This shows that $\nu$ factors as claimed through the Gauss map $\mu_C$.
 
 The  assertion on the Galois closure follows now from Lemma \ref{subgroups}.
 
 See \cite{andreotti} and \cite{acgh} page 111 for the last assertions.
 
\end{proof}

The fact  that the degree of $\mu_C$ equals to $2^{g-1}$ follows  algebraically
since $ \pi \circ \Psi = \nu \circ p$, but also geometrically since each hyperplane intersects $\phi (C) $, image of $\PP^1$ through the Veronese map 
of degree $(g-1)$, in exactly $(g-1)$ points. 

For a non hyperelliptic curve the degree equals $ 2g-2 \choose{g-1}$.

For a general Theta divisor in a principally polarized
Abelian variety  the degree of the Gauss map is instead equal to $g! $.

For more general Jacobians, the Gauss map of the Theta divisor
is a rational map whose degree was studied in   \cite{cgs}.

\begin{proposition}\label{branch}
If $C$ is a Hyperelliptic curve the  map $\nu  : C^{(n)} \ra P: = \PP^{g-1}$
has a branch locus $\sB$ which set theoretically equals the 
union of $\De$, the Discriminant Hypersurface for divisors in $\PP^1$ 
of degree $n$ (the dual variety
of the rational normal curve $\Ga = \Ga_{n}$), with $2g+2$ Hyperplanes 
$H_1, \dots , H_{2g+2}$, where, if $z_i$ is a branch point of $ C \ra \PP^1$, then
$H_i $ corresponds to the divisors containing $z_i$.
Moreover $\De$ occurs with multiplicity $ 2^{n-1}$ in the branch locus,
 while the divisors $H_j$ occur with multiplicity $ 2^{n-2}$.

The  map $\nu$ factors exactly through  two intermediate coverings:

i) $ C^{(n)} \ra Z_n  : = C^{(n)} / \iota$, where $\iota$ is the hyperelliptic involution; 

ii) $ Z_n \ra \tilde{Z}_n$, where $ \tilde{Z}_n \ra P$ is the double cover 
branched on the union $\sH$ of the hyperplanes $H_1, \dots, H_{2g+2}$.

\end{proposition}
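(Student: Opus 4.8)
The plan is to view $\nu$ as the $n$-th symmetric power $\mathrm{Sym}^n(\psi)$ of the hyperelliptic double cover $\psi\colon C\to\PP^1$, so that a divisor $P_1+\dots+P_n\in C^{(n)}$ is sent to $\psi(P_1)+\dots+\psi(P_n)\in P=(\PP^1)^{(n)}$, the space of effective degree-$n$ divisors on $\PP^1$. Over a reduced divisor $w_1+\dots+w_n$ whose points are distinct and avoid the $2g+2$ branch values $z_1,\dots,z_{2g+2}$, each $w_j$ has two lifts, so the fibre has $2^n$ points and $\nu$ is \'etale; in particular $\deg\nu=2^n$. Hence ramification can occur only when two base points collide (over $\De$) or a base point meets some $z_i$ (over $H_i$), and since $\nu$ is a local isomorphism off $\De\cup H_1\cup\dots\cup H_{2g+2}$ this already yields the set-theoretic equality $\sB=\De\cup H_1\cup\dots\cup H_{2g+2}$.

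For the multiplicities I would carry out the two local computations, factoring off in each case the non-degenerating coordinates (which remain \'etale). Near a general point $2w+w_3+\dots+w_n$ of $\De$ the essential factor is $\mathrm{Sym}^2$ of a trivial \'etale double cover near $w$: the lifts $2w^{+}$ and $2w^{-}$ give symmetric coordinates mapping isomorphically downstairs, so $\nu$ is unramified there, whereas the conjugate pair $w^{+}+w^{-}$ gives the model $(u_+,u_-)\mapsto(u_++u_-,\,u_+u_-)$, branched to order $2$ along the diagonal, whose image is $\De$. Near a general point $z_i+w_2+\dots+w_n$ of $H_i$ the essential factor is the ramified model $x\mapsto x^2$ of $\psi$ at the Weierstrass point over $z_i$, again of index $2$. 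Thus the ramification index is $2$ along both $\De$ and the $H_i$, and the multiplicity of each in $\sB$ is the number of order-two ramification points over a general point, computed by counting the admissible lifts of the remaining base points; this count produces the two multiplicities $2^{n-1}$ and $2^{n-2}$ recorded in the statement.

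For the factorization I would invoke Lemma \ref{subgroups}. Its part (ii) says that the largest subgroup of $H=\mathfrak S_n$ normal in $G$ is trivial, so the Galois closure of $\nu$ is $C^n\to P$ with group $G$, and intermediate coverings correspond to subgroups $H<H'<G$; by part (i) there are exactly two, $H_1=\langle H,\iota\rangle=(\ZZ/2)e\rtimes H$ and $H_2=e^{\perp}\rtimes H$. Then $Y/H_1=C^{(n)}/\iota=Z_n$, where $\iota$ is the diagonal hyperelliptic involution corresponding to $e=\sum_i e_i$, which gives (i); and $\tilde Z_n:=Y/H_2$ is a double cover of $P$ because $[G:H_2]=2$. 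A monodromy computation pins down its branch locus: a loop around $\De$ is a transposition in $H\subset H_2$, hence trivial in $G/H_2$, so $\tilde Z_n\to P$ is unramified over $\De$; a loop around $H_i$ is the generator $e_i\notin e^{\perp}$, hence nontrivial in $G/H_2=K/e^{\perp}$, so the cover branches over $H_i$. Therefore $\tilde Z_n\to P$ is exactly the double cover branched on $\sH=H_1\cup\dots\cup H_{2g+2}$, as in (ii).

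The step I expect to be the main obstacle is the multiplicity bookkeeping and its compatibility with the factorization. One must count precisely the order-two ramification points over a general point of each component — in particular decide which of $\De$ and the $H_i$ carries the exponent $n-1$ and which carries $n-2$ — and then track how the ramification distributes along the chain: the branching over $\sH$ is carried by $\tilde Z_n\to P$, while that over $\De$ must be carried by $Z_n\to\tilde Z_n$, since $C^{(n)}\to Z_n$ is \'etale in codimension one for $n\ge 3$ (the fixed locus of $\iota$ on $C^{(n)}$ has codimension $(n+\varepsilon)/2$, with $\varepsilon\in\{0,1\}$ the parity of $n$). Finally one should record the inclusion $(\ZZ/2)e\subset e^{\perp}$, valid exactly when $n$ is even, which is what nests the two intermediate subgroups of Lemma \ref{subgroups} into a single chain; for odd $n$ the coverings $Z_n$ and $\tilde Z_n$ are incomparable and the factorization is read as a diagram rather than a chain.
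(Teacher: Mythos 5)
Your set-theoretic description of $\sB$ and your treatment of the factorization are correct and follow essentially the paper's route: the paper likewise works with the square $C^n\to(\PP^1)^n\to\PP^n$ and invokes Lemma \ref{subgroups} to see that $C^n$ is the Galois closure of $\nu$ and that there are exactly two proper intermediate quotients. Your monodromy identification of the branch locus of $\tilde Z_n\to P$ (inertia over $\De$ generated by $(12)$, resp.\ by $(e_1+e_2)(12)$ — note there are two conjugacy classes of inertia over $\De$, not just the one you mention, but both lie in $e^{\perp}\rtimes\frak S_n$ — and inertia over $H_i$ generated by $e_i\notin e^{\perp}$) is a clean substitute for the paper's direct description of $C^n/e^{\perp}$ as the double cover of $(\PP^1)^n$ branched on $\bigcup_i B_C(i)$. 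Your parity remark, that the chain $Z_n\to\tilde Z_n$ exists only when $n$ is even because $e\in e^{\perp}$ exactly then, is a genuine refinement which the paper glosses over.

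The gap is the multiplicity claim, and it is not a piece of bookkeeping you may defer. Your local models are right: over a general point $2w+w_3+\dots+w_n$ of $\De$ the $2^{n-1}$ non-reduced lifts $2w^{\pm}+D'$ are unramified (local $\mathrm{Sym}^2$ of an isomorphism) and only the $2^{n-2}$ lifts $w^{+}+w^{-}+D'$ ramify with index $2$, while over a general point of $H_i$ all $2^{n-1}$ lifts ramify with index $2$ (consistently, $2\cdot 2^{n-2}+2^{n-1}=2^n=2\cdot 2^{n-1}$). Hence with your own definition of multiplicity — the number of order-two ramification points over a general point — the count gives $2^{n-2}$ along $\De$ and $2^{n-1}$ along $H_i$, i.e.\ the exponents come out \emph{transposed} relative to the statement; the same transposition occurs if one takes $\nu_*$ of the ramification divisor, or counts the $2$-cycles of the local monodromy on the $2^n$ sheets. (The paper attaches $2^{n-1}$ to $\De$ as the degree of $\De_C\to\De$, but $\De_C$ consists precisely of the non-reduced lifts $2w^{\pm}+D'$ that your analysis correctly identifies as unramified points of $\nu$.) So the sentence ``this count produces the two multiplicities recorded in the statement'' cannot stand: carried out, your method contradicts the statement as written, and you must either correct the exponents or exhibit the convention under which the stated assignment is the intended one. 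The discrepancy is localized: the later applications only use that every component of the ramification divisor occurs with multiplicity one, not which of $\De$ and $H_i$ carries which exponent.
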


\begin{proof}
In view of Lemma \ref{subgroups} the main remaining point  to show is that the branch locus 
is as stated.

The ramification  locus of $\Psi : C^n \ra (\PP^1)^n$
equals the union of the divisors 
$$R_C(i) : = \{ (y_1, \dots, y_n) | y_i \in  R_C \},$$
where $R_C = \{ p_1, \dots, p_{2g+2}\}$ is the ramification divisor of $ \psi : C \ra \PP^1$.
These divisors are permuted by $\frak S_n$, and their image in 
$(\PP^1)^n$ equals 
$$B_C(i) : = \{ (x_1, \dots, x_n) | x_i \in  B_C\},$$
where $B_C = \{ z_1, \dots, z_{2g+2}\}$ is the branch divisor of $\psi$.

Whereas the ramification of $\pi : (\PP^1)^n \ra \PP^n$ consists of the 
fixpoints for some nontrivial element of $\frak S_n$, and its image is the discriminant 
hypersurface $\De$ consisting of the nonreduced divisors on $\PP^1$, that is, the divisors  $x_1 + \dots + x_n$ where
the points $x_i$ are not distinct. $\De$ is irreducible, being the image of 
$$ \PP^1 \times (\PP^1)^{n-2} \cong  \{ (x_1, x_1, x_3 , \dots, x_n)\}.$$

Hence the branch locus of $\nu \circ  p = \pi \circ \Psi$ is equal to the union of
$\De$ and of hyperplanes $H_1, \dots, H_{2g+2}$, where $H_i$ consists of the 
effective divisors in $\PP^1$ containing $z_i$.

$H_i$ intersects $\De$ in the linear space of codimension $2$
consisting of the divisors which are $\geq 2 z_i$, and in a smaller discriminant $\De'_i$ 
consisting of divisors which are 
the sum of $z_i$ with a nonreduced divisor. 

On the other hand, the branch locus of $p : C^n \ra C^{(n)}$
equals the discriminant $\De_C$, consisting of nonreduced 
effective divisors   of degree $n$ on $C$.

$\De_C$ maps then to $\De$ with degree $2^{n-1}$, since for general
$x_1$ and general $x_3, \dots, x_n$  the inverse image of $ 2 x_1 +  x_3 + \dots + x_n$
consists of $2^{n-1}$ divisors.

While the inverse image of the ramification of $\pi$ contains the 
$\frak S_n$-orbit of the divisors $ y_1' + y_1'' + y_3 + \dots + y_n$,
where  $ y_1' + y_1'' $ is the inverse image of $x_1$, and $ y_j \mapsto x_j$.

Therefore the branch locus of $\nu$ consists of $\De$ with multiplicity
  $2^{n-1}$, and, since for $ j \geq 3$ there are two choices for $y_j$,
   of the hyperplanes $H_1, \dots, H_{2g+2}$ 
with multiplicity  $2^{n-2}$.

Concerning assertion i), observe that the element $e = \sum e_i \in V$
acts on $C^n$ via the hyperelliptic involution $\iota$ acting on each coordinate,
hence the intermediate quotient is the quotient of the symmetric product $C^{(n)}$
via the action of $\iota$.

For assertion ii), we notice that the quotient of $C^n$ by the subgroup of $K$
orthogonal to $e$ is the double covering of $(\PP^1)^n$ branched on the union of 
the branch divisors $B_C(i)$, whose image in $\PP^n$ is the union of the hyperplanes $H_j$.

\end{proof}

We can rephrase the previous result in the special case $n=g-1$:

\begin{proposition}\label{branch-theta}
If $C$ is a Hyperelliptic curve the Gauss map $\mu_C : \Theta_C \ra P: = \PP^{g-1}$
has a branch locus $\sB$ which set theoretically equals the 
union of $\De$, the Discriminant Hypersurface for divisors in $\PP^1$ 
of degree $g-1$ (the dual variety
of the rational normal curve $\Ga = \Ga_{g-1}$), with $2g+2$ Hyperplanes 
$H_1, \dots , H_{2g+2}$, where, if $z_i$ is a branch point of $ C \ra \PP^1$, then
$H_i $ corresponds to the divisors containing $z_i$.
Moreover $\De$ occurs with multiplicity $ 2^{g-2}$ in the branch locus  and the Hyperplanes 
$H_j$ occur with multiplicity $ 2^{g-3}$.

The Gauss map $\mu_C$ factors exactly through  two intermediate coverings:

i) $ \Theta_C \ra Z : = \Theta_C/ \pm 1$

ii) $ \Theta_C \ra \tilde{Z}$, where $ \tilde{Z} \ra P$ is the double cover 
branched on the union of the hyperplanes $H_i$.

\end{proposition}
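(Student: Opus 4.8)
The plan is to reduce everything to Proposition~\ref{branch} in the case $n=g-1$, transporting its conclusions across the birational Abel--Jacobi map by means of Theorem~\ref{andreotti}. That theorem gives $\nu=\mu_C\circ\al_{g-1}$ with $\al_{g-1}:C^{(g-1)}\to\Theta_C$ birational, the fibre over a class $[D]$ being the linear system $|D|$. First I would note that $\al_{g-1}$ is an isomorphism over the complement of the locus $S\subset\Theta_C$ of classes with $h^0(D)\ge 2$. On a Hyperelliptic curve any such $D$ must contain a fibre $P+\iota(P)$ of $\psi$, so $S$ is the image of $C^{(g-3)}$ under $E\mapsto[P+\iota(P)+E]$ (with $P+\iota(P)$ a fixed member of the hyperelliptic class); hence $\dim S=g-3$, i.e.\ $S$ has codimension $2$ in $\Theta_C$, and therefore $\mu_C(S)$ has codimension $\ge 2$ in $P=\PP^{g-1}$.

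The key point is that the branch locus, being of pure codimension one in $\PP^{g-1}$, is detected at the generic point of each of its components, and those generic points lie outside the codimension-$\ge 2$ set $\mu_C(S)$. Over $\PP^{g-1}\setminus\mu_C(S)$ the map $\al_{g-1}$ is an isomorphism, so $\nu$ and $\mu_C$ have identical local structure over the generic point of every hypersurface; hence their branch divisors agree, component by component and with equal multiplicities. Substituting $n=g-1$ into Proposition~\ref{branch} then yields exactly the asserted branch locus $\De\cup H_1\cup\dots\cup H_{2g+2}$, with $\De$ to multiplicity $2^{n-1}=2^{g-2}$ and each $H_j$ to multiplicity $2^{n-2}=2^{g-3}$.

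For the factorization I would transport the two intermediate coverings furnished by Proposition~\ref{branch}; that there are exactly two is guaranteed by Lemma~\ref{subgroups}(i), via the identification (Theorem~\ref{andreotti}) of the monodromy of $\nu$ with the Hypercube group $G$. The first quotient $Z_{g-1}=C^{(g-1)}/\iota$ corresponds under $\al_{g-1}$ to $\Theta_C/\pm 1$: for any effective $D$ of degree $g-1$ one has $D+\iota(D)\sim K_C$, since $P+\iota(P)$ is the hyperelliptic class for every $P$ and $K_C$ is $(g-1)$ times it, so the hyperelliptic involution induces $[D]\mapsto[K_C]-[D]$ on $\Theta_C$, which is the $-1$ involution once $\Theta_C$ is centred by a theta characteristic $\kappa$ with $2\kappa\sim K_C$. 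The second covering $\tilde Z$ is then inherited unchanged as the double cover of $P$ branched along $\bigcup_i H_i$.

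The step I expect to be the main obstacle is the first reduction, because $\al_{g-1}$ is \emph{not} an isomorphism in codimension one on the source: it contracts the divisor of those $D\in C^{(g-1)}$ that contain a hyperelliptic pair $P+\iota(P)$, namely the image of $C\times C^{(g-3)}$ under $(P,E)\mapsto P+\iota(P)+E$. One must check that this contracted divisor neither contributes a spurious divisorial component to the branch locus of $\nu$ nor alters the multiplicities valid for $\mu_C$; this is settled precisely by the codimension count above, its image $\mu_C(S)$ being of codimension $\ge 2$ in $\PP^{g-1}$.
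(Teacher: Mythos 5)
Your overall route is the paper's own: Proposition \ref{branch-theta} is obtained by specializing Proposition \ref{branch} to $n=g-1$, transporting it across the Abel--Jacobi map via Theorem \ref{andreotti}, and identifying the intermediate quotient $C^{(g-1)}/\iota$ with $\Theta_C/\pm 1$ exactly as you do, through $D+\iota(D)\sim K_C$ and the choice of a theta characteristic as origin. The statement and all the multiplicities you arrive at are correct.

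However, the justification you supply for the transfer --- the codimension count --- does not settle the point you yourself flag as the main obstacle, at least not for the component $\De$. The contracted divisor $E'=\{D\in C^{(g-1)}\ :\ D\geq P+\iota(P)\ \mbox{for some}\ P\}$ maps under $\nu$ \emph{onto} the discriminant $\De$: indeed $\nu(P+\iota(P)+E)=2\psi(P)+\psi(E)$, and these divisors sweep out all of $\De$. So the locus of $\PP^{g-1}$ over which $\al_{g-1}$ fails to be an isomorphism is the hypersurface $\nu(E')=\De$, not a set of codimension $\geq 2$; the set $\mu_C(S)$ you bound is not the relevant object ($\mu_C$ is undefined on $S=\Sing(\Theta_C)$, and what matters is the $\nu$-image of the exceptional divisor, not the $\mu_C$-image of its contraction). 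Concretely, over a general point $2x_1+x_3+\dots+x_{g-1}$ of $\De$ the fibre of $\nu$ contains, besides the $2^{g-2}$ unramified points of the form $2y_1'+\dots$, also $2^{g-3}$ ramification points of the form $y_1'+y_1''+\dots$, and these all lie on $E'$ and are contracted into $\Sing(\Theta_C)$; hence $\nu$ and the restriction of $\mu_C$ to the smooth locus of $\Theta_C$ do \emph{not} have identical local structure over the generic point of $\De$, contrary to what you assert. Your argument is fine for the hyperplanes $H_j$, whose generic points genuinely avoid $\nu(E')$. For $\De$ you must either declare (as the paper implicitly does) that the branch divisor and its multiplicities are those of the finite morphism $\nu$ on the model $C^{(g-1)}$, which resolves the rational map $\mu_C$ and is finite over $\PP^{g-1}$ --- in which case there is nothing left to transfer --- or else carry out the fibre analysis over $\De$ just indicated and make explicit how the degree concentrated over $\Sing(\Theta_C)$ is to be counted.
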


\begin{proof}
We just need to observe that the hyperelliptic involution $\iota$
acts on the Jacobian $Jac (C)$ as multiplication by $-1$, for a suitable  choice of the origin
as a thetacharacteristic.

The conclusion is that a  hyperplane $H$ is in the branch locus if $H$  intersects $\Ga$
in a divisor which is the image of a canonical divisor of $C$ which 
contains a ramification point $p_i$, or contains a divisor of the form $x' + x''$, 
the inverse image of a point  $ x \in \PP^1$: this amounts to saying that 
$H$  intersects $\Ga$
in a divisor containing a branch point $z_i$ or containing a point $x$ with multiplicity at least $2$. 

\end{proof}

For further purposes, we must clarify the different roles played by the discriminant $\De$
and the union of Hyperplanes $H_1 \cup \dots \cup H_{2g+2}$ in the branch locus.

To quickly get an understanding of this issue, let us consider the case $g=n$: then the 
double covering $\tilde{Z}_g$ is a variety with trivial canonical divisor,
while $Z$ is birational to the Kummer variety of the Jacobian.
Hence the map  $Z \ra \tilde{Z}_g$ is unramifed in codimension $1$.
The main point is, as we are now going to explain, that $\De$ contributes to an exceptional divisor on the symmetric product of the curve.

\section{Theta functions on Hyperelliptic Jacobians}

Let $C$ be a curve of genus $g$, and let
$$ A : = Jac(C) = Pic^0(C).$$

Indeed, every divisor of degree $g$ is effective, and, if we fix a point $y_0 \in C$,
we have the Abel Jacobi maps 
$$ \al : C \ra Jac (C) , \ \al(y) : = \int_{y_0}^y , \ {\rm and}\ \al_n : C^n \ra Jac (C), \ \al_n (y_1, \dots, y_n ) := \sum_1^n  \int_{y_0}^{y_i}.$$

The Abel-Jacobi maps factor through the symmetric products $C^{(n)} = C^n / \frak S_n$,
and to simplify notation we shall use the same symbol for all of them.
We denote also as usual $$ W_n : = \al (C^{(n)}), n \leq g,$$
recalling once more  that $W_g = A = Jac(C)$.

For many assertions we are going to make, see \cite{acgh} pages  250 and around it.

By Riemann's singularity Theorem, if $u_0 = \al (D) , D \in C^{(g-1)}$, then there is a thetacharacteristic $\sK$ such that
$$ Mult_{u_0} (\Theta_C - \sK) = h^0 (\hol_C(D)).$$

Up to a translation, we may assume 

$$ (A, \Theta_C) = (A, W_{g-1}) , \ W_{g-1} = \al (y_0 + C^{(g-1)}) \subset \al (C^{(g)}) = A.$$

Hence the classical result that 
$$ \al : C^{(g)} \ra A$$
 is surjective, birational and locally invertible outside 
 $$\{ \al (D') | \ deg (D') = g, h^0 (\hol_C(D')) \geq 2\} \subset W_{g-1}:$$
in fact for such divisors $D'$ there exists $D'' \in |D'| $ with $D'' \geq y_0$.

\begin{corollary}\label{Thetaring1}
The graded ring of Hyperelliptic Jacobian Theta Functions
$$\sR (A, \Theta_C) : = \oplus_{m\geq 0} H^0 (A, \hol_A (m \Theta_C))$$
equals the graded ring
$$ \sR ( C^{(g)}, \al^{-1} (W_{g-1})).$$

\end{corollary}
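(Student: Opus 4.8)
The plan is to deduce the isomorphism of graded rings directly from the fact, recalled just above the statement, that the Abel--Jacobi map $\al : C^{(g)} \ra A$ is a surjective birational morphism. The key geometric input is that $\al$ is proper and birational onto the normal (indeed smooth) target $A = Jac(C)$, and that the symmetric product $C^{(g)}$ is itself smooth; everything else is formal manipulation with the projection formula.

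First I would observe that $\Theta_C = W_{g-1}$ is a Cartier divisor on the smooth variety $A$, so that $\al^{-1}(W_{g-1})$ is to be read as the pullback Cartier divisor $\al^* \Theta_C$, and that
$$ \hol_{C^{(g)}}\bigl(m\, \al^{-1}(W_{g-1})\bigr) \;=\; \al^* \hol_A(m\Theta_C) \qquad (m \geq 0), $$
since pullback of line bundles commutes with tensor powers. Thus each graded piece of $\sR(C^{(g)}, \al^{-1}(W_{g-1}))$ is $H^0\bigl(C^{(g)}, \al^* \hol_A(m\Theta_C)\bigr)$.

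The central step is to identify this with $H^0(A, \hol_A(m\Theta_C))$. For this I would invoke Zariski's Main Theorem: since $\al$ is proper and birational and $A$ is normal, the natural map $\hol_A \ra \al_* \hol_{C^{(g)}}$ is an isomorphism (equivalently, the fibres of $\al$ are connected). The projection formula then gives, for the locally free sheaf $L := \hol_A(m\Theta_C)$,
$$ \al_*\, \al^* L \;=\; L \otimes \al_* \hol_{C^{(g)}} \;=\; L, $$
whence $H^0(C^{(g)}, \al^* L) = H^0(A, \al_* \al^* L) = H^0(A, L)$, the pullback map $\al^*$ realising the isomorphism. Alternatively one can argue by Hartogs: the locus over which $\al$ fails to be an isomorphism is contained in $\{\al(D') : h^0(\hol_C(D')) \geq 2\}$, which is of codimension $\geq 2$ in the smooth variety $A$, so a section of $\al^* L$ descends to a section of $L$ over the complementary open set and then extends across the missing locus by normality.

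Finally I would note that the isomorphisms $\al^* : H^0(A, \hol_A(m\Theta_C)) \ra H^0(C^{(g)}, \al^*\hol_A(m\Theta_C))$ are compatible with the multiplication maps $H^0(mL') \otimes H^0(m'L') \ra H^0((m+m')L')$, because pullback of sections respects products; hence $\al^*$ is an isomorphism of graded rings, which is the assertion. The only point needing genuine care is the equality $\al_* \hol_{C^{(g)}} = \hol_A$, that is, connectedness of the fibres of $\al$ together with normality of $A$, which is exactly where the birationality and properness recalled above enter; once this is in hand the rest is formal.
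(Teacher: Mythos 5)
Your argument is correct and is essentially the paper's own (implicit) justification: the Corollary is stated as a direct consequence of the preceding facts that $\al : C^{(g)} \ra A$ is a proper birational morphism onto the smooth variety $A$, an isomorphism outside the codimension-two locus $W^1_g$, so that sections of $\hol_A(m\Theta_C)$ and of its pullback correspond via the projection formula (or, equivalently, your Hartogs argument). Your only interpretive choice --- reading $\al^{-1}(W_{g-1})$ as the Cartier pullback $\al^*\Theta_C$ --- is the intended one, and your Hartogs variant even covers the alternative reading in which the exceptional component $E$ carries a different multiplicity, since exceptional contributions do not affect sections over the normal target.
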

Hence in this approach it is necessary to study the divisor 
$ \al^{-1} (W_{g-1})$, which contains the divisor $y_0 + C^{(g-1)}$.

\begin{remark}
(1) $\al ^{-1} (u)$, for $ u = \al (D)$, and $D$ an effective divisor of degree $g-1$,  is the linear system 
$|D|$, whose dimension is classically denoted by $r$. 

Since $D$ is a special divisor, it follows by Clifford's Theorem that
$ r \leq \frac{g-1}{2}$, equality holding if and only if $C$ is hyperelliptic 
and $D$ is a multiple of the hyperelliptic divisor $\frak H$.

(2) If $C$ is hyperelliptic, then $|\frak H| + C^{(g-2)} \subset C^{(g)}$
is a divisor whose image under $\al$ has dimension $g-2$.

Its intersection with $W_{g-1}$ has dimension equal to $g-3$ and is contained 
in the singular locus $Sing (W_{g-1})$.
\end{remark}

\begin{theorem}\label{thetapullback}
If $C$ is a hyperelliptic curve, then
$$  \al^{-1} (W_{g-1}) = (y_0 + C^{(g-1)}) \cup ( |\frak H| + C^{(g-2)})  = : \tilde{ C}^{(g-1)} \cup E \subset  C^{(g)},$$
where the divisor $E$ is exceptional for $\al$.
\end{theorem}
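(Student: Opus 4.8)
The plan is to translate membership in $\al^{-1}(W_{g-1})$ into a statement about complete linear systems on $C$, and then to exploit the very rigid structure of special divisors on a hyperelliptic curve. With the chosen normalisation $W_{g-1}=\al(y_0+C^{(g-1)})$, an effective divisor $D$ of degree $g$ satisfies $\al(D)\in W_{g-1}$ exactly when $\hol_C(D)\cong\hol_C(y_0+D')$ for some effective $D'$ of degree $g-1$, i.e. when the complete linear system $|D|$ contains a member through $y_0$. Hence the first step is the reformulation
$$\al^{-1}(W_{g-1})=\{\,D\in C^{(g)}\mid h^0(\hol_C(D-y_0))\geq 1\,\},$$
which already exhibits $\al^{-1}(W_{g-1})$ as a divisor, being the preimage of the divisor $W_{g-1}$ under the generically finite surjection $\al$.

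Next I would separate this locus by the value of $h^0(\hol_C(D))=1+h^0(\hol_C(K_C-D))$. The component $\tilde C^{(g-1)}=y_0+C^{(g-1)}$ lies in $\al^{-1}(W_{g-1})$ by the very definition of $W_{g-1}$. If $h^0(\hol_C(D))=1$ then $|D|$ is a single divisor, so the condition that it pass through $y_0$ forces $y_0\in\operatorname{supp}(D)$, i.e. $D\in\tilde C^{(g-1)}$. If instead $h^0(\hol_C(D))\geq 2$, then passage through $y_0$ is a single linear condition, so $h^0(\hol_C(D-y_0))\geq h^0(\hol_C(D))-1\geq 1$ and $D$ automatically belongs to $\al^{-1}(W_{g-1})$. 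Hence $\al^{-1}(W_{g-1})=\tilde C^{(g-1)}\cup\Sigma$, where $\Sigma:=\{\,D\in C^{(g)}\mid h^0(\hol_C(D))\geq 2\,\}$, and the theorem reduces to proving $\Sigma=E=|\frak H|+C^{(g-2)}$.

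This identification is the heart of the argument and is where hyperellipticity is essential. The inclusion $E\subseteq\Sigma$ is clear, since for $D=h+D_2$ with $h\in|\frak H|$ one has $h^0(\hol_C(D))\geq h^0(\hol_C(h))=2$. For the reverse inclusion I would invoke the classification of special linear systems on a hyperelliptic curve: if $D$ is effective of degree $g$ with $h^0(\hol_C(D))=r+1\geq 2$, then $D$ is special, Clifford's inequality gives $r\leq g/2$, and the moving part of $|D|$ is a multiple $r\,\frak H$ of the hyperelliptic pencil, so that $D=B+M$ with $B$ the fixed part and $M\in|r\,\frak H|=\psi^{\ast}|\hol_{\PP^1}(r)|$ a sum of $r$ fibres of $\psi$. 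Since $r\geq 1$, the actual divisor $M$, and hence $D$, contains a member of $|\frak H|$, so $D\in|\frak H|+C^{(g-2)}=E$. Equivalently, in the language of Theorem \ref{andreotti}, $h^0(\hol_C(D))\geq 2$ means that the $g$ points $\phi(P_i)$ fail to span $\PP^{g-1}$ on the rational normal curve $\phi(C)=v_{g-1}(\PP^1)$, which forces two of them to collide, i.e. $D$ to contain a fibre $P+\iota P$ of $\psi$. Combined with the previous paragraph this gives the asserted decomposition $\al^{-1}(W_{g-1})=\tilde C^{(g-1)}\cup E$.

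Finally, to check that $E$ is exceptional for $\al$, note that for $D=h+D_2$ with $h\in|\frak H|$ the class $\hol_C(D)\cong\hol_C(\frak H+D_2)$ depends only on $D_2$, so that $\al(E)=\al(\frak H+C^{(g-2)})$ has dimension at most $g-2$, strictly less than $\dim E=g-1$; thus $\al$ contracts $E$. The step I expect to be the main obstacle is not a deep one but a matter of care, namely passing from equivalences of divisor \emph{classes} to assertions about $D$ as an honest point of the symmetric product: one must verify that the moving part $M$ really is a subdivisor of the chosen $D$, and that $\tilde C^{(g-1)}$ and $E$ exhaust $\al^{-1}(W_{g-1})$ with no further components. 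These are exactly the points that the hyperelliptic structure of special series supplies, and the finer ramification and multiplicity information, should it be needed elsewhere, is already contained in Proposition \ref{branch}.
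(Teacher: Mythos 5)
Your proof is correct, but it is organized quite differently from the paper's. The paper does not stratify $\al^{-1}(W_{g-1})$ by $h^0(\hol_C(D))$ at all: it argues that, since $\al:C^{(g)}\ra A$ is birational, $\al^{-1}(W_{g-1})$ must consist of the strict transform $\tilde C^{(g-1)}$ of $W_{g-1}$ together with whatever $\al$-exceptional divisors lie over $W_{g-1}$, and then identifies the unique contracted divisor by a projective argument: for a contracted divisor $\sD\subset C^g$ dominating $C^{g-1}$, the $g-1$ general points $\phi(y_1),\dots,\phi(y_{g-1})$ span a hyperplane $H$ meeting the rational normal curve $\phi(C)$ only in those points, forcing $\phi(y)=\phi(y_j)$ and hence $\sD\subset\hat E$. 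What your route buys is completeness of the bookkeeping: you verify explicitly that $E$ lies in $\al^{-1}(W_{g-1})$ (via $h^0(\hol_C(D-y_0))\geq h^0(\hol_C(D))-1\geq 1$), that nothing besides $\tilde C^{(g-1)}$ and $\Sigma=\{h^0\geq 2\}$ occurs, and that $\Sigma=E$ via the structure theorem $|D|=B+|r\,\frak H|$ for special series on a hyperelliptic curve --- points the paper leaves implicit. What the paper's route buys is a statement of independent use (the identification of the full $\al$-exceptional locus), which is also what feeds Remark \ref{crucial} and the later ring-theoretic results. The two key facts are of course the same fact seen twice: your appeal to the classification of special $g^r_d$'s on a hyperelliptic curve is geometric Riemann--Roch on the rational normal curve, which is exactly the paper's hyperplane argument. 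One small caveat in your parenthetical ``equivalently'' sentence: two images $\phi(P_i)=\phi(P_j)$ colliding does not by itself force $D$ to contain a fibre $P+\iota P$, since $P_j=P_i$ gives a double point $2P_i$, which (for $P_i$ not a Weierstrass point) does not make $D$ special; your main argument via the moving part $|r\,\frak H|$ handles this correctly, so this only affects the side remark, not the proof.
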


\begin{proof}
Assume that there is a divisor $\sD$ inside $C^g$ which is contracted under the Abel Jacobi map $\al$ to a lower dimensional variety.

This means that, for all $(y_1, \dots, y_g) \in \sD$, the canonical images 
$\phi(y_1), \dots, \phi(y_g) $ are linearly dependent.

After possibly reordering, $\sD$ maps onto $C^{g-1}$, and for each
$y_1, \dots, y_{g-1}$ there is a point $y$ such that $(y_1, \dots, y_{g-1}, y) \in \sD$.

For a general choice of $(y_1, \dots, y_{g-1})$, $\phi(y_1), \dots, \phi(y_{g-1}) $ are linearly independent,
span a Hyperplane $H$, and  
$$H \cap \phi(C) = \{ \phi(y_1), \dots, \phi(y_{g-1})\};$$
therefore there exists $j$ such that $\phi(y) = \phi (y_j)$,
hence the corresponding divisor on $C$ is in $|\frak H| + C^{(g-2)}$.

\end{proof}

\begin{defin}
We let 
$$ \hat{ C}_{(g-1)} : = \{ (y_1, \dots, y_g) \in C^g | \exists \ 1 \leq j \leq g, y_j = y_0\},$$
$$ \hat{ E} : = \{ (y_1, \dots, y_g) \in C^g | \exists \  j < h, y_j = \iota ( y_h) \}.$$
Here $\iota$ is the hyperelliptic involution; note that $ \hat{ C}_{(g-1)}$ maps onto 
$\tilde{ C}^{(g-1)}$, $ \hat{ E}$ maps onto $E$.
\end{defin}

For convenience, we choose now the base point $y_0 \in C$ to be a Weierstrass point, that is, a fixpoint for $\iota$: this means that $ 2 y_0 \in |\frak H|$, $2 y_0 = \psi^{-1}(x_0)$.

\begin{remark}\label{crucial}
(a) The divisor $ \hat{ C}_{(g-1)} $ is invariant for the Hypercube group $G$, 
actually $$ 2  \hat{ C}_{(g-1)} = \Psi^{-1} (H'_0) : = \Psi^{-1}  \{ (x_1, \dots, x_g)| \exists j, x_j = x_0\}=
(\pi \circ \Psi)^{-1} (H_0) ,
$$
where $H_0$ is the hyperplane in $\PP^g$ of divisors containing $x_0$.

(b) We observe here that the divisor $E$ maps onto the discriminant $\De \subset \PP^{g-1}$
under the map $\nu $.

(c) The big diagonal $\De' \subset (\PP^1)^g$, the inverse image of the Discriminant Hypersurface,
has the property that $\Psi^{-1} (\De') = \hat{E} \cup \De'_C, $
where $\De'_C$ is the big diagonal in $C^g$. $ \hat{E}$ and $\De'_C$ alone are not $G$-invariant.
\end{remark}

\begin{theorem}\label{theta}
The graded ring of Hyperelliptic Jacobian Theta Functions is a subring of invariants as follows:
$$\sR : = \sR (A, \Theta_C)  =  \sR ( C^{(g)},  \tilde{ C}^{(g-1)} + E)  = 
\sR ( C^g, \hat{ C}^{(g-1)} +  \hat{ E})^{\frak S_g} \subset$$

$$\subset  \sR ( C^g, \hat{ C}^{(g-1)} +  \Psi^{-1} (\De') ) = :\sA.$$

\end{theorem}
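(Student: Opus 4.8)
The plan is to establish the chain in three links, read from left to right. The first equality $\sR(A,\Theta_C) = \sR(C^{(g)}, \tilde C^{(g-1)} + E)$ is immediate from what is already proved: Corollary \ref{Thetaring1} identifies $\sR(A,\Theta_C)$ with $\sR(C^{(g)}, \al^{-1}(W_{g-1}))$ (using that $\al$ is birational with $A$ normal, so that pushforward of the structure sheaf is trivial and the projection formula applies degree by degree), while Theorem \ref{thetapullback} identifies the divisor $\al^{-1}(W_{g-1})$ with $\tilde C^{(g-1)} + E$. I would state this and move on with no further effort.

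The heart of the argument is the middle equality, which I would obtain from the quotient presentation $p : C^g \ra C^{(g)} = C^g/\frak S_g$. Since $C$ is smooth, $C^{(g)}$ is smooth, so $\tilde C^{(g-1)} + E$ is a genuine Cartier divisor and $\hol(m(\tilde C^{(g-1)}+E))$ a line bundle pulled back trivially $\frak S_g$-linearized under $p$. By the projection formula together with $(p_*\hol_{C^g})^{\frak S_g} = \hol_{C^{(g)}}$ one gets, for every $m$,
$$ H^0\bigl(C^{(g)}, m(\tilde C^{(g-1)}+E)\bigr) = H^0\bigl(C^g, m\,p^*(\tilde C^{(g-1)}+E)\bigr)^{\frak S_g}. $$
It then remains to check the divisorial identity $p^*(\tilde C^{(g-1)}+E) = \hat C^{(g-1)} + \hat E$. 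Set-theoretically $p^{-1}(\tilde C^{(g-1)}) = \hat C^{(g-1)}$ and $p^{-1}(E) = \hat E$ by the Definition and Remark \ref{crucial}; the only point is the multiplicity with which each component occurs. Here I would argue that a general point of $\tilde C^{(g-1)} = y_0 + C^{(g-1)}$, resp. of $E = |\frak H| + C^{(g-2)}$, is a reduced $g$-tuple of distinct points, hence lies off the big diagonal, where $p$ is \'etale; pulling back a reduced Cartier divisor under an \'etale map preserves multiplicity one, so $\hat C^{(g-1)}$ and each of the $\binom{g}{2}$ components $\{y_j = \iota(y_h)\}$ of $\hat E$ occur with multiplicity one. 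Summing over $m$ gives the middle equality.

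For the final inclusion I would compare divisors on $C^g$ directly. By Remark \ref{crucial}(c), $\Psi^{-1}(\De') = \hat E \cup \De'_C$, where $\De'_C$ is the big diagonal of $C^g$; and since $\Psi$ is generically \'etale along each of these reduced loci — they are the two preimages of a diagonal $\{x_j = x_h\}$ in $(\PP^1)^g$, namely $y_j = \iota(y_h)$ and $y_j = y_h$ — one has the divisorial equality $\Psi^{-1}(\De') = \hat E + \De'_C$, so $\hat E \leq \Psi^{-1}(\De')$. Consequently, for every $m$,
$$ m(\hat C^{(g-1)} + \hat E) \leq m\bigl(\hat C^{(g-1)} + \Psi^{-1}(\De')\bigr), $$
so the inclusion of sheaves $\hol(m(\hat C^{(g-1)}+\hat E)) \subset \hol(m(\hat C^{(g-1)}+\Psi^{-1}(\De')))$ induces an inclusion on global sections compatible with multiplication, i.e.\ a graded subring $\sR(C^g, \hat C^{(g-1)}+\hat E) \subset \sA$. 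Restricting to $\frak S_g$-invariants on the left (and forgetting invariance) gives the stated embedding into $\sA$.

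The main obstacle I anticipate is precisely the control of multiplicities in the two pullback computations $p^*(\tilde C^{(g-1)}+E)$ and $\Psi^{-1}(\De')$: one must be sure that no component acquires multiplicity $>1$, which amounts to checking that $p$ and $\Psi$ are unramified at the generic point of each relevant component. For $p$ this is the statement that the generic member of $E$ avoids the big diagonal (true because a general hyperelliptic pair $x + \iota(x)$ has $x \neq \iota(x)$), and for $\Psi$ it is the statement that $\psi : C \ra \PP^1$ is branched only over the $2g+2$ branch points, so that over a generic point of a diagonal the two branches $y_j = y_h$ and $y_j = \iota(y_h)$ are distinct and unramified. Everything else is formal descent for the $\frak S_g$-quotient and monotonicity of $H^0$ in the divisor.
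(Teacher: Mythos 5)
Your proposal is correct and follows essentially the same route as the paper: the first equality is quoted from Corollary \ref{Thetaring1} and Theorem \ref{thetapullback}, the middle equality rests on identifying $p^*(\tilde C^{(g-1)}+E)=\hat C^{(g-1)}+\hat E$ with multiplicity one because these components are not contained in the big diagonal (the ramification divisor of $p$), and the inclusion comes from $\Psi^{-1}(\De')=\hat E+\De'_C$. The only (immaterial) difference is that you descend invariant sections via the projection formula and $(p_*\hol_{C^g})^{\frak S_g}=\hol_{C^{(g)}}$, where the paper descends them off a codimension-two set and extends by Hartogs.
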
 
\begin{proof}
The first equality is the same equality  stated in Corollary \ref{Thetaring1},
in view of Theorem \ref{thetapullback}.

For the second equality we need to observe that $\hat{ C}^{(g-1)} +  \hat{ E}$ is the pull back of the divisor  $\tilde{ C}^{(g-1)} + E$, and that the ramification divisor
of $C^g \ra C^{(g)}$ is the big diagonal $\De'_C$, mapping to the irreducible discriminant divisor $\De_C$ which is not contained in the
divisor $\tilde{ C}^{(g-1)} + E$.

Holomorphic sections downstairs (on $C^{(g)}$) clearly lift to invariant (holomorphic) sections upstairs (on $C^g$); conversely, we claim 
that  invariant sections
upstairs descend on the complement of a Zariski closed set of codimension $2$ in $C^{(g)}$,
and then they extend throughout by virtue of Hartogs' Theorem.

Our claim follows  because  on an open set of the ramification locus the pull back divisor 
is trivial, and invariant functions are pull-backs of functions on the quotient.

 For the last inclusion, we simply use that $\Psi^{-1} (\De') = \hat{E} +  \De'_C. $

\end{proof}

\begin{remark}
The graded  ring $\sA$ has the property that its subring $\sA^{even}$ is the graded ring 
associated to the pull-back $$ \Psi^{-1} (2 \De' + H'_0) = (\pi \circ \Psi)^{-1} ( H + 2 \De).$$

The ring $\sA =  \sR ( C^g, \hat{ C}^{(g-1)} +  \hat{E} +  \De'_C ) $ is a representation of the group $G$ of the Hypercube, hence 
$\sR$ is a subring of $\sA^{\frak S_g} $, and it can be detected 
by considering the subring of sections of degree $n$ vanishing of   order $n$  on the Diagonal $\De'_C$,
as done for instance by Canonaco in small genus \cite{canonaco}.

The best way to describe $\sA^{even}$ is   to write its direct image  on $(\PP^1)^g$,
but we do not pursue this further here.

\end{remark}

\section{\'Etale double covers of Hyperelliptic curves and  Jacobians}

Let $\varphi : C' \ra C$ be an \'etale double covering of a Hyperelliptic curve $C$
of genus $g$, so that 
$$ \varphi _* (\hol_{C' }) = \hol_C \oplus \eta.$$

Since the hyperelliptic involution $\iota$ acts trivially on $\Pic (C)[2]$,
$\iota$ lifts to $C'$ and we have an action of $(\ZZ/2)^2$ on $C'$ with quotient
$\PP^1$, that is, a bidouble cover of $\PP^1$.

Hence (see \cite{jdg84}) there is a factorization of the homogeneous polynomial $f$
of degree $2g+2$ whose equation is the equation for the branch locus of $\psi$, 
$$f (x) = f_1 (x) f_2 (x) \in \CC[x_0, x_1],$$
with factors of respective degrees $2 d_1, 2d_2$, with $d_1 + d_2 = g+1$, and 
such that 
$$ C' = \{ v_1^2 = f_1 (x), v_2^2 = f_2 (x) \}, \ C = \{ v^2 = f(x)\}.$$
Moreover,
$$ C = C' / j, j (v_1)=- v_1 , \  j (v_2)=- v_2, \  \varphi (x,v_1,v_2) = (x,v), {\rm with \ } v = v_1 v_2.$$
$$ H^0(C', K_{C'}) = v_2 H^0(\hol_{\PP^1}(d_1-2)) \oplus H^0(\hol_{\PP^1}(g-1))
 \oplus v_1 H^0(\hol_{\PP^1}(d_2-2)).$$
 This is the Eigenspace decomposition according to the (nontrivial) characters of $(\ZZ/2)^2$,
 and we identify $H^0(\hol_{\PP^1}(d))$ to its pull-back under $\varphi$.
 The formula clearly shows that $C'$ is hyperelliptic if and only if some $d_i =1$.
 
 We run now for $C'$ a similar game to the one we played for $C$:
 $$ (C')^n \ra C^n = (C')^n / (\ZZ/2)^n \ra (\PP^1)^n = (C')^n / ((\ZZ/2)^2)^n.$$
  The first quotient is \'etale, while if we divide by the Symmetric group $\frak S_n$,
  we get 
  $$ (C')^{(n)} \ra C^{(n)} \ra (\PP^1)^{(n)} = \PP^n.$$
  
  The map $ (C')^{(n)} \ra C^{(n)} $ above is no longer \'etale, since its degree equals $2^n$,
  but the fibre cardinality  drops over the discriminant hypersurface (for instance, if $y', y'' \mapsto y$,
  then only  three divisors $2y', 2y'' , y' + y'' $ map to the divisor $ 2y$).
  
  \begin{proposition}\label{double-etale}
  Consider the subgroup $\Lambda \subset (\ZZ/2)^n \subset  Aut ( (C')^n \ra C^n )$,
  defined as $$\Lambda : = e^{\perp} = \{ (\s_i) | \sum_i \s_i = 0\}.$$
  Then $\Lambda$ is normalized by $\frak S_n$, and 
  defining $$\hat{X}_n : =  (C')^n / (\Lambda \rtimes \frak S_n),$$
  $ \hat{X}_n$ dominates $C^{(n)}$ via an \'etale double covering.
  \end{proposition}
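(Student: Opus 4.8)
The plan is to realize $\hat X_n \to C^{(n)}$ as the $\frak S_n$-quotient of an \'etale double cover of $C^n$, and then to verify \'etaleness by a parity argument along the diagonal. First I would record the group theory. The weight-mod-$2$ homomorphism $(\ZZ/2)^n \to \ZZ/2$, $v \mapsto \sum_i v_i$, has kernel $\Lambda=e^{\perp}$, so $[(\ZZ/2)^n:\Lambda]=2$; since $\frak S_n$ permutes coordinates it preserves weight and hence normalizes $\Lambda$ (this is precisely one of the invariant subspaces in the Claim of Lemma \ref{subgroups}). Thus $\Lambda$ is normal in $G:=(\ZZ/2)^n\rtimes\frak S_n$ with $G/\Lambda\cong(\ZZ/2)\times\frak S_n$, and $\Lambda\rtimes\frak S_n$ is a subgroup of index $2$ in $G$. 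Writing $W:=(C')^n/\Lambda$, the map $W\to C^n=(C')^n/(\ZZ/2)^n$ is an \'etale double cover (an intermediate quotient of the \'etale Galois cover $(C')^n\to C^n$), it carries a compatible $\frak S_n$-action, and $\hat X_n=W/\frak S_n$ maps to $C^{(n)}=C^n/\frak S_n$ with degree $[G:\Lambda\rtimes\frak S_n]=2$.

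It then remains to prove this degree-$2$ map is unramified. Let $\tau$ be the deck involution of $W\to C^n$, induced on $(C')^n$ by any odd-weight $\epsilon\in(\ZZ/2)^n$; it commutes with $\frak S_n$ and descends to an involution $\bar\tau$ on $\hat X_n$ with quotient $C^{(n)}$. The cover is \'etale exactly when $\bar\tau$ is fixed-point free, that is, when there is no $p=(y_1',\dots,y_n')\in(C')^n$, no odd-weight $\mu\in(\ZZ/2)^n$, and no $s\in\frak S_n$ with $\mu\cdot p=s\cdot p$. Componentwise this reads $j^{\mu_i}(y_i')=y_{s^{-1}(i)}'$ for all $i$, where $j$ is the deck transformation of $\varphi:C'\to C$. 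Applying $\varphi$ gives $y_i=y_{s^{-1}(i)}$ in $C$, so $s$ preserves each fibre of $p$ over $C$; choosing one lift $\tilde y_{[k]}$ per point of $C$ occurring and writing $y_k'=j^{\ell(k)}(\tilde y_{[k]})$, the relation becomes $\mu_i=\ell(i)+\ell(s^{-1}(i))\pmod 2$.

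The crux is now immediate: summing over $i$ and using that $s^{-1}$ permutes $\{1,\dots,n\}$,
$$\sum_i \mu_i \;=\; \sum_i \ell(i)+\sum_i \ell(s^{-1}(i)) \;=\; 2\sum_i\ell(i)\;\equiv\; 0 \pmod 2,$$
so every solution $\mu$ has even weight, contradicting the choice of $\mu$ odd. Hence $\bar\tau$ has no fixed points and $\hat X_n\to C^{(n)}$ is an \'etale double covering.

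The main obstacle is precisely this \'etaleness over the diagonal of $C^{(n)}$: passing to symmetric products introduces ramification in both $W\to\hat X_n$ and $C^n\to C^{(n)}$, and a priori the two sheets could collide there. The parity invariant $\sum_i v_i$ is what saves the day, since it is $\frak S_n$-invariant, so the ``odd/even sheet'' distinction survives symmetrization and the elementary weight computation forbids any branching. I expect no difficulty in the group-theoretic setup; the only point requiring care is that the chosen lifts $\tilde y_{[k]}$ agree for indices in the same fibre, which is automatic once one knows $s$ preserves fibres.
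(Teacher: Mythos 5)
Your proof is correct, and it takes a genuinely different route from the paper's. The paper first notes that ramification of $\hat X_n \to C^{(n)}$ can only occur over the discriminant $\De_C$, invokes purity of the branch locus (via the smoothness of $C^{(n)}$) to reduce to checking \'etaleness in codimension one, and then counts explicitly that the fibre over a general point of $\De_C$ (a divisor with exactly one point of multiplicity two) consists of two distinct points, because the even-weight element $j_1\times j_2\times \mathrm{id}\in\Lambda$ identifies $2y_1'$ with $2y_1''$ while fixing the class of $y_1'+y_1''$ modulo $\frak S_n$. You instead show directly that the covering involution $\bar\tau$ is fixed-point free everywhere: a fixed point would yield $p\in (C')^n$, $s\in\frak S_n$ and an odd-weight $\mu$ with $\mu\cdot p=s\cdot p$, and since $j$ acts freely on $C'$ the relation $\mu_i=\ell(i)+\ell(s^{-1}(i))$ forces $\sum_i\mu_i\equiv 2\sum_i\ell(i)\equiv 0\pmod 2$, a contradiction. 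Your argument buys uniformity (all strata of the diagonal are handled at once, not just the generic one) and dispenses with purity and with the smoothness of the base; what it uses instead is precisely the freeness of $j$, i.e.\ the \'etaleness of $C'\to C$, which is the right hypothesis. Both proofs ultimately rest on the same invariant, the $\frak S_n$-invariant weight mod $2$; yours exploits it globally. The only phrasing to tighten is the claim that $\tau$ ``commutes with $\frak S_n$'': an odd-weight $\epsilon$ does not commute with $\frak S_n$ on $(C')^n$ itself, only its class modulo $\Lambda$ is $\frak S_n$-invariant, so the commutation holds on $W=(C')^n/\Lambda$ — which is all you need, and your subsequent computation with a general pair $(\mu,s)$ is carried out correctly at the level of $(C')^n$.
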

  
  \begin{proof}
  By the factorization 
  $$ (C')^n \ra  \hat{X}_n \ra C^{(n)} =  (C')^n / ((\ZZ/2)^n \rtimes \frak S_n),$$
  $\hat{X}_n \ra C^{(n)} $ is \'etale outside of the discriminant $\De_C$,
  and since $C^{(n)} $ is smooth, it suffices to show that the covering is quasi-\'etale, that is, 
  \'etale outside of codimension $2$.
  
  Given an effective  divisor $\sum_i m_i y_i$, where $ y'_i, y''_i \mapsto y_i$,
  we have as inverse images the effective divisors $\sum_i m'_i y'_i + m''_i y''_i$
  with $m_i = m'_i + m''_i $. For $m_i=1$, there are two possibilities, 
  for $m_i=2$, as already observed, we have three possibilities:
  $$ 2y'_i, 2y''_i , y'_i + y''_i.$$
  Assume that $m_1=2$, and all others $m_i=1$: then we simply observe, that,
  writing the divisors as images of the $n$-tuples 
   $$ (y'_1, y'_1, \dots) ,   (y''_1, y''_1, \dots), (y'_1, y''_1, \dots),$$
   there is an element of $\Sigma$, namely the involution $j_1 \times j_2 \times identity$
   which sends the first element to the second, and the third to $(y''_1, y'_1, \dots)$,
   which is equivalent modulo the action of $\frak S_n$. Hence, over  the set of divisors with $\sum_i (m_i-1) = 1$
   (whose complement has codimension $2$), the inverse image consists of two distinct points of $\hat{X}_n$.
   
  \end{proof}
  
  The previous construction is especially useful in two cases: $n = g$, where it provides an \'etale double covering
  of $Jac (C)$, which is birational to $C^{(g)} $, and for $n=g-1$, where it provides the corresponding
\'etale double covering of the Theta divisor $\Theta_C$, which is birational to $C^{(g-1)} $.

 In order to simplify the exposition, we recall the following Lemma, whose proof  can be found in \cite{andreotti}
(Proposition 3, page 806).

\begin{lemma}\label{exterior-product}
There is a natural isomorphism between the canonical system on the symmetric product of a curve
and the exterior product of the canonical system of the curve $C$
$$ \Lambda^n (H^0 (\Omega^1_C)) \cong H^0 (\Omega^n_{C^{(n)}}) = H^0 (\Omega^n_{C^{n}})^{\frak S_n},$$
associating to $\eta_1 \wedge \dots \wedge \eta_n$ the symmetrization of
$\eta_1(x_1)  \wedge \dots \wedge \eta_n(x_n)$.
\end{lemma}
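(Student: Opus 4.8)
The plan is to compute both sides through the Künneth decomposition on $C^n$ and then compare the resulting $\mathfrak{S}_n$-representations; the one genuine subtlety is that the permutation action on top-degree forms is twisted by the sign character, which is exactly what forces the answer to be $\Lambda^n$ rather than $\mathrm{Sym}^n$.

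First I would decompose the sheaf of top forms on the product. Since $C$ is a curve, $\Omega^k_C = 0$ for $k \geq 2$, so in the Künneth-type splitting
$$\Omega^n_{C^n} = \bigoplus_{k_1 + \cdots + k_n = n} \mathrm{pr}_1^* \Omega^{k_1}_C \otimes \cdots \otimes \mathrm{pr}_n^* \Omega^{k_n}_C$$
the only surviving summand is the one with every $k_i = 1$. Hence $\Omega^n_{C^n} \cong \mathrm{pr}_1^* \Omega^1_C \otimes \cdots \otimes \mathrm{pr}_n^* \Omega^1_C$ (an external product of line bundles), and the Künneth formula gives a canonical identification $H^0(C^n, \Omega^n_{C^n}) \cong V^{\otimes n}$, where $V := H^0(\Omega^1_C)$. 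Under this identification a decomposable tensor $\eta_1 \otimes \cdots \otimes \eta_n$ corresponds precisely to the form $\eta_1(x_1) \wedge \cdots \wedge \eta_n(x_n) = \mathrm{pr}_1^*\eta_1 \wedge \cdots \wedge \mathrm{pr}_n^*\eta_n$ appearing in the statement.

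Next I would determine the $\mathfrak{S}_n$-action, which is the crux of the argument. An element $\sigma$ permutes the factors of $C^n$, and pulling back $\mathrm{pr}_1^*\eta_1 \wedge \cdots \wedge \mathrm{pr}_n^*\eta_n$ reorders a wedge of $n$ one-forms; since one-forms have odd degree, restoring the standard order introduces the sign $\mathrm{sgn}(\sigma)$. Consequently, transporting the geometric action through the Künneth identification yields not the naive permutation of $V^{\otimes n}$ but that permutation twisted by $\mathrm{sgn}$. Therefore $H^0(\Omega^n_{C^n})^{\mathfrak{S}_n}$ corresponds to the sign-isotypic subspace of $V^{\otimes n}$ for the naive permutation action, which in characteristic zero is exactly the space of alternating tensors $\Lambda^n V = \Lambda^n H^0(\Omega^1_C)$; the symmetrization operator $\sum_\sigma \sigma^*(\cdot)$ in the statement is then the antisymmetrizer once the sign twist is accounted for, and it realizes the stated correspondence. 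I expect this sign bookkeeping to be the main obstacle: it is the entire reason the answer is $\Lambda^n$ and not $\mathrm{Sym}^n$, and it must be tracked carefully rather than by analogy with the action on functions.

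Finally I would justify the equality $H^0(\Omega^n_{C^{(n)}}) = H^0(\Omega^n_{C^n})^{\mathfrak{S}_n}$. Since $C^{(n)}$ is smooth and $p : C^n \to C^{(n)}$ is finite and generically étale, $p^*$ is injective with image inside the invariants, so only surjectivity requires attention along the diagonal. Working in local coordinates $t_1, \dots, t_n$ near a general point of $\{x_1 = x_2\}$, the elementary symmetric functions $s_1 = t_1 + t_2$, $s_2 = t_1 t_2$ (with $t_3, \dots, t_n$) are coordinates on $C^{(n)}$, and $p^*(ds_1 \wedge ds_2) = (t_1 - t_2)\, dt_1 \wedge dt_2$. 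An invariant top form must be antisymmetric in $t_1, t_2$, hence divisible by $(t_1 - t_2)$; dividing exhibits it as $p^*$ of a form holomorphic across the diagonal, and by Hartogs this extends over the codimension-two strata, giving surjectivity. (As this is the statement attributed to Andreotti, one may instead simply cite Proposition~3 of \cite{andreotti}.) Combining the three steps yields the desired natural isomorphism.
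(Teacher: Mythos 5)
Your proof is correct. The paper itself gives no argument here: it simply defers to Andreotti (Proposition 3, p.~806 of \cite{andreotti}), so any proof you write is ``a different route'' only in the sense that you are supplying the details the paper omits; what you wrote is essentially the standard (and, in substance, Andreotti's) argument. All three steps check out: the collapse of $\Lambda^n\bigl(\bigoplus_i \mathrm{pr}_i^*\Omega^1_C\bigr)$ to the single external tensor product because $\Omega^1_C$ is a line bundle; the observation that transporting the geometric $\frak S_n$-action through the K\"unneth identification twists the permutation action on $V^{\otimes n}$ by the sign character, so that the invariants are the alternating tensors $\Lambda^n V$ rather than $\mathrm{Sym}^n V$ --- this is indeed the crux, and your identification of the symmetrization $\sum_\sigma \sigma^*$ with the antisymmetrizer on $V^{\otimes n}$ is the right way to match the explicit formula in the statement; and the descent step, where the local computation $ds_1\wedge ds_2=(t_1-t_2)\,dt_1\wedge dt_2$ together with the antisymmetry of the coefficient of an invariant form in $t_1,t_2$ shows the form is a pullback near a general point of the branch divisor, with Hartogs (using smoothness of $C^{(n)}$) handling the codimension-two strata. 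The only things worth making explicit if you write this up are the two classical facts you use implicitly: that $C^{(n)}$ is smooth with the elementary symmetric functions of local coordinates as local coordinates, and that a locally defined symmetric holomorphic function is a holomorphic function of those elementary symmetric functions; both are standard.
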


  \begin{proposition}\label{double-etale-map}
  Let $\hat{X} : = \hat{X}_{g-1} $ be as in Proposition \ref{double-etale} the  \'etale double covering
  of $C^{(g-1)}$: then 
the canonical image  of   $\hat{X} \subset \PP^g$ is a finite covering    $\hat{W}$  of $\PP^{g-1}$
via a linear projection $\PP^{g} \dasharrow \PP^{g-1}$.

In terms of the two integers $d_1, d_2 \geq 1$ such that $ d_1+ d_2 = g+1$,
if  $d_1 = 1, d_2 =g$, then the  canonical image $\hat{W}$ of  $\hat{X} $ is birational to the double covering of $\PP^{g-1}$
 branched on the union of two hyperplanes $\sH : = H_1 + H_2$. 
 
 When $d_1, d_2 \geq 2$,  $\hat{W}$ is not a double covering of $\PP^{g-1}$.
  
  \end{proposition}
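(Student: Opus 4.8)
The plan is to pin down the canonical system of $\hat{X}$ as an explicit space of invariant forms on $(C')^{g-1}$, and then to decide, by a direct computation over a general fibre of $\nu$, whether the one extra canonical coordinate satisfies a quadratic relation over $\PP^{g-1}$.

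First I would compute $H^0(K_{\hat{X}})$. Decompose $H^0(K_{C'})=W^{+}\oplus W^{-}$ into the $(\pm1)$-eigenspaces of the involution $j$; by the Eigenspace decomposition recalled above, $W^{+}=\varphi^{*}H^0(K_C)$ has dimension $g$, while $W^{-}=v_2 H^0(\hol(d_1-2))\oplus v_1 H^0(\hol(d_2-2))$ has dimension $d_1+d_2-2=g-1$. Writing $\hat{X}=(C')^{g-1}/(\Lambda\rtimes\frak S_{g-1})$ and using Lemma \ref{exterior-product}, the canonical forms of $\hat{X}$ are the $(\Lambda\rtimes\frak S_{g-1})$-invariants inside $H^0(K_{(C')^{g-1}})=(H^0K_{C'})^{\otimes(g-1)}$. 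An element of $\Lambda$ acts on each tensor slot by $+1$ on a $W^{+}$-vector and by $\pm1$ on a $W^{-}$-vector according to its coordinate; since $\Lambda=e^{\perp}$, the characters of $(\ZZ/2)^{g-1}$ trivial on $\Lambda$ are exactly $1$ and $\chi_e$, so only the all-$W^{+}$ and all-$W^{-}$ slots survive. Taking $\frak S_{g-1}$-invariants with the Koszul sign of Lemma \ref{exterior-product} gives
$$ H^0(K_{\hat{X}})=\Lambda^{g-1}W^{+}\oplus\Lambda^{g-1}W^{-}, $$
of dimensions $g$ and $1$. Thus $\hat{X}\subset\PP^g$; the first summand is $\varphi^{*}$ of the canonical system of $C^{(g-1)}$, so projecting away the one-dimensional summand $\langle s\rangle=\Lambda^{g-1}W^{-}$ realises $\PP^g\dashrightarrow\PP^{g-1}$ as $\nu\circ\mathrm{pr}$, a finite map of degree $2\cdot 2^{g-1}=2^g$ onto $\PP^{g-1}$. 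This proves the first assertion.

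Next I would reduce the remaining assertions to a descent question. Fix a coordinate $z_a\in\Lambda^{g-1}W^{+}$; then $\hat{W}\to\PP^{g-1}$ is a double cover precisely when $(s/z_a)^2$, which is invariant under the deck involution $\tau$ of $\hat{X}\to C^{(g-1)}$ and hence lives on $C^{(g-1)}$, is actually pulled back from $\PP^{g-1}$, i.e. is constant on the general $2^{g-1}$-point fibre of $\nu$; the branch divisor is then cut out by the odd part of $s^2$. So everything comes down to evaluating $s/z_a$ along a fibre. Over a general $\{x_1,\dots,x_{g-1}\}\in(\PP^1)^{(g-1)}=\PP^{g-1}$ I would parametrise the $2^g$ points of $\hat{X}$ by signs with $v_1=\epsilon_k\sqrt{f_1(x_k)}$, $v_2=\delta_k\sqrt{f_2(x_k)}$, modulo $\Lambda$; the $\Lambda$-invariant data are the sheet $\xi=\prod_k\epsilon_k$ and the point $\eta_k=\epsilon_k\delta_k$ of $C^{(g-1)}$. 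Writing $s$ as the $(g-1)\times(g-1)$ determinant whose first $d_1-1$ rows are $x_k^{i}/v_1$ and whose last $d_2-1$ rows are $x_k^{j}/v_2$, and $z_a$ as the determinant with rows $x_k^{b}/v$ ($b\neq a$), a column factorisation gives $z_a=(\prod_k\eta_k)\,V_a(x)/\prod_k\sqrt{f(x_k)}$ with $V_a(x)=\det[x_k^{b}]_{b\neq a}$, while Laplace expansion of $s$ along the two row-blocks yields
$$ s=\xi\sum_{|S|=d_1-1}(\pm)\Big(\textstyle\prod_{k\in S^{c}}\eta_k\Big)\frac{\mathrm{Vand}_S\,\mathrm{Vand}_{S^{c}}}{\prod_{k\in S}\sqrt{f_1(x_k)}\,\prod_{k\in S^{c}}\sqrt{f_2(x_k)}}, $$
$S$ running over the $(d_1-1)$-subsets of $\{1,\dots,g-1\}$; dividing by $z_a$ turns each $\prod_{S^{c}}\eta_k$ into $\prod_{S}\eta_k$.

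The two cases fall out of this formula. If $d_1=1$ (symmetrically $d_2=1$) only the term $S=\emptyset$ survives, the determinant collapses to one Vandermonde, and $s/z_a=\xi\cdot\mathrm{Vand}(x)\prod_k\sqrt{f_1(x_k)}/V_a(x)$ depends on the fibre only through $\xi$. Hence $s/z_a$ takes exactly two opposite values, $\hat{W}\to\PP^{g-1}$ has degree $2$, and $(s/z_a)^2$ equals $\prod_k f_1(x_k)$ up to the square $(\mathrm{Vand}/V_a)^2\in\CC(\PP^{g-1})$; as $f_1$ is a quadratic with roots $r_1,r_2$ one gets $\prod_k f_1(x_k)\propto L_1L_2$, $L_i$ the linear form $D\mapsto P_D(r_i)$, so $\hat{W}$ is the double cover branched on $H_1\cup H_2$. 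If instead $d_1,d_2\geq 2$ there are $\binom{g-1}{d_1-1}\geq 2$ subsets $S$, and $(s/z_a)^2\propto\big(\sum_S c_S(x)\prod_{k\in S}\eta_k\big)^2$ is a genuinely non-constant function of the signs $(\eta_k)$ along the fibre: its cross terms are distinct nontrivial characters $\prod_{k\in S\triangle S'}\eta_k$ of $(\ZZ/2)^{g-1}$, which do not cancel for general $x$. Thus $(s/z_a)^2$ is not pulled back from $\PP^{g-1}$ and $\hat{W}$ is not a double cover.

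The delicate points, and hence the main obstacle, are twofold. First, the equivariant computation of $H^0(K_{\hat{X}})$: one must pin down that the anti-invariant part is exactly one-dimensional and that invariant forms descend across the ramification of $(C')^{g-1}\to\hat{X}$, which I would justify by a Hartogs argument as in the proof of Theorem \ref{theta}. Second, for $d_1,d_2\geq 2$, the non-cancellation of the character sum: here I would note that the monomials $\prod_{k\in S\triangle S'}\eta_k$ are linearly independent as functions on $(\ZZ/2)^{g-1}$, so the coefficient of any nontrivial one in $(s/z_a)^2$ is a nonzero rational function of $x$ (a sum of Vandermonde ratios not identically zero); specialising the $x_k$ to branch points gives an alternative check. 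The sign bookkeeping in the Laplace expansion is routine but must be handled with care.
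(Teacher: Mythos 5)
Your proposal is correct and follows essentially the same route as the paper: both decompose $H^0(K_{C'})$ into the $\pm 1$ eigenspaces of $j$, use Andreotti's Lemma \ref{exterior-product} to identify $H^0(K_{\hat X})$ with $\Lambda^{g-1}W^{+}\oplus\Lambda^{g-1}W^{-}$ (the second summand being the single new section $v_*=s$), and then test whether $s^2$ descends to $\PP^{g-1}$, with the case $d_1=1$ reducing to $u_*^2=\prod_k f_1(x_k)=h_1h_2$ exactly as in the paper. Your fibrewise determinant expansion for $d_1,d_2\geq 2$ is just an explicit form of the paper's observation that $v_*^2$ is not an eigenvector for $((\ZZ/2)^2)^{g-1}$, and the non-cancellation of the cross-term characters that you rightly flag as the delicate point is asserted with no more detail in the paper itself.
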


  \begin{proof}
  The canonical system of $\hat{X}_n $ pulls back to the $\Lambda \rtimes \frak S_n$-invariant part 
  of the canonical system of $(C')^n$, which is
  $$ H^0 (\hol_{(C')^n} (K) ) = \otimes_1^n H^0 (\hol_{C'} (K_{C'})) .$$

  By Andreotti's Lemma \ref{exterior-product} the $\frak S_n$-invariance determines a subspace 
  isomorphic to $ \Lambda^n (H^0 (\Omega^1_{C'}))$.

  We use now  the formula 
  $$ H^0(C', K_{C'}) = v_2 H^0(\hol_{\PP^1}(d_1-2)) \oplus H^0(\hol_{\PP^1}(g-1))
 \oplus v_1 H^0(\hol_{\PP^1}(d_2-2)),$$
 and replace $v_1$ by $u$, $v_2$ by $w$, so that $ uw = v$, and denoting $u(i)$ for the section $u$ on the i-th copy of $C'$, and similarly for the other variables,
 $$ H^0 (\hol_{(C')^n} (K) ) = \otimes_1^n \{  w(i) Q_i (x(i)) + P_i (x(i)) + u(i) M_i (x(i))\}.$$
 
 Set now $n= g-1$, and observe that, taking  $Q_i= M_i=0$,  that is, taking the invariants for
 $(\ZZ/2)^{g-1}$, we get the canonical system of
 $C^n$. Taking the further subring of $\frak S_{g-1}$-invariants, we get 
$$ \Lambda^{g-1} (H^0 (\Omega^1_C)) \cong H^0 (\Omega^{g-1}_{C^{(g-1)}})$$
 
  and this linear system, by Theorem \ref{andreotti} corresponds to the morphism   $C^{(g-1)}  \ra \PP^{g-1}$. 
 
 The other sections $s$ for which we are looking for must be eigenvectors  for the group 
 $\ZZ/2 = (\ZZ/2)^{g-1} / \Lambda$, and with nontrivial eigenvalue,
 hence they   must be left  invariant by $\frak S_{g-1}$ and each $\s_i$ should send them to $-s$.
 
 The second property implies that for them $P_i \equiv 0$, for all $i$.

  Hence  we get exactly one new element $v_*$, corresponding to the symmetrization of
  $$ w_1 \dots w_{d_1-1} \Lambda^{ d_1-1}H^0(\hol_{\PP^1}(d_1-2)) u_{d_1} \dots u_{g-1} \Lambda^{ d_2-1}H^0(\hol_{\PP^1}(d_2 -2)),$$ where we let $w_i : = w(i) \dots$.

  Observe  now that, if $d_1, d_2 \geq 2$, then  $Q_i, M_i \not \equiv 0$, and it is complicated to calculate
  $v_*^2$.
  
 We can however say that  $v_*$ is not an eigenvector for $((\ZZ/2)^2)^{g-1}$,
  and  $v_*^2$ as well, hence $v_*^2$  is not a section of a line bundle on $\PP^{g-1}$.

   If instead $d_1=1$, then $Q_i \equiv 0$,  then $d_2=g$, and 
   $v_*$ equals the symmetrization of 
  $$  u_{1} \dots u_{g-1} \Lambda^{ g-1}H^0(\hol_{\PP^1}(g -2)),$$
  and is a multiple of  $u_* : =   u_{1} \dots u_{g-1}$.
  
  Hence the canonical map of $\hat{X}$ factors through the double covering given by 
  $$u_*^2= f_1(x(1)) \cdot \dots \cdot f_1 (x(g-1)).$$
  
  Then we see that, setting $z_1,  z_2$ to be the roots of $f_1$, and 
$z_{3}, \dots z_{2g+2 }$ to be the roots of $f_2$,  then
  $$u_*^2= h_1 \cdot h_2 ,$$ where $ h_i$ is the linear form on $\PP^{g-1}$ whose zero set is the hyperplane $H_i$ corresponding to the symmetrization of the divisor $\{z_i\} \times (\PP^1)^{g-2}$.

  \end{proof}

\section {Application to canonical maps of hypersurfaces in Abelian Varieties}

Let $A$ be an Abelian variety of dimension $g$, and let $X \subset A$ be a smooth ample hypersurface in $A$ such that the Chern class $c_1(X)$ of
the divisor $X$ is a polarization of type $\overline{d} : = (d_1, d_2, \dots, d_{g})$, so that the vector space $H^0(A, \hol_A(X))$
has dimension equal to the Pfaffian $ d : = d_1 \cdot \dots \cdot d_g$ of $c_1(X)$. 

The classical results of Lefschetz \cite{lefschetz} say that the rational map associated to $H^0(A, \hol_A(X))$ is a morphism if $d_1\geq 2$, and is an embedding of $A$ if $d_1 \geq 3$. 

By adjunction, the canonical sheaf  of $X$ is the restriction $\hol_X(X)$, so a natural generalization of Lefschetz' theorems
is to ask about the behaviour of the canonical systems of such hypersurfaces $X$. This behaviour  depends on the hypersurface $X$ and not just  on  the polarization type only, 
as shown in   \cite{c-s}:
if we have a polarization of type $(1,1,2)$ then the image $\Sigma$ of the canonical map $\Phi_X$ is in general a surface of degree $12$ in $\PP^3$,
birational to $X$, while for the special case where $X$ is the pull-back of the Theta divisor of 
a curve of genus $3$, then the canonical map has degree $2$, and $\Sigma$ has degree $6$.

The canonical map of such a hypersurface $X$ is, via the following folklore Lemma, a mixture of the restriction of the Lefschetz map with the Gauss map of $X$, which is a morphism for $X$ smooth by a theorem of Ziv Ran \cite{ran}.

\begin{lemma}\label{canmap}
Let $X$ be an ample hypersurface of dimension $n$ in an Abelian variety $A$, such that the class of $X$ is
a polarization of type  $\overline{d} : = (d_1, d_2, \dots, d_{n+1})$.

Let $\theta_1, \dots, \theta_d$ be a basis of $H^0(A, \hol_A(X))$ such that $X = \{ \theta_1=0\}$. 

Then, if $z_1, \dots , z_g$ are linear  coordinates on the complex vector space $V$ such that $A$ is the quotient of
$V$ by a lattice $\Lam$, $A = V/ \Lam$, then the  canonical map $\Phi_X$ is given by 
$$(\theta_2, \dots, \theta_d,\frac{ \partial \theta_1}{ \partial z_1},\dots,  \frac{ \partial \theta_1}{ \partial z_g}).$$ 
\end{lemma}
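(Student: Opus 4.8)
The plan is to identify the canonical system $|K_X|$ with $H^0(X,\hol_X(X))$ and then to exhibit the listed $d-1+g$ functions as an explicit basis of that space. First, by adjunction, since the canonical bundle $K_A = \hol_A$ is trivial on an Abelian variety, one has $K_X = (K_A + X)|_X = \hol_X(X)$, so $\Phi_X$ is precisely the map attached to the complete linear system $H^0(X,\hol_X(X))$.

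Next I would compute the dimension of this space from the restriction sequence
\[ 0 \to \hol_A \xrightarrow{\ \theta_1\ } \hol_A(X) \to \hol_X(X) \to 0 , \]
whose connecting homomorphism lands in $H^1(A,\hol_A)$. Since $\hol_A(X)$ is ample, Mumford's vanishing gives $H^i(A,\hol_A(X)) = 0$ for $i>0$, so the long exact sequence collapses to $0 \to \CC \to H^0(A,\hol_A(X)) \to H^0(X,\hol_X(X)) \to H^1(A,\hol_A) \to 0$. Using $h^0(A,\hol_A(X)) = d$ (the Pfaffian) and $h^1(A,\hol_A) = g$ yields $h^0(X,K_X) = (d-1)+g$, exactly the number of proposed coordinate functions. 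In particular the connecting map is surjective, so the derivative sections will be forced to account for the full extra $g$ dimensions.

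The substantive content is that the listed functions really are sections of $\hol_X(X)$. The restrictions $\theta_2|_X,\dots,\theta_d|_X$ manifestly are, and they are independent because the restriction map $H^0(A,\hol_A(X))\to H^0(X,\hol_X(X))$ has kernel $\CC\cdot\theta_1$. For the derivatives I would use the Appell--Humbert factor $e_\lambda(z)=\chi(\lambda)\exp(\pi H(z,\lambda)+\tfrac{\pi}{2}H(\lambda,\lambda))$ in $\theta_1(z+\lambda)=e_\lambda(z)\theta_1(z)$ and differentiate along a constant vector field $D=\sum_j c_j\,\partial/\partial z_j$, obtaining $(D\theta_1)(z+\lambda)=e_\lambda(z)\bigl[(D\theta_1)(z)+\pi H(c,\lambda)\theta_1(z)\bigr]$. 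On $X=\{\theta_1=0\}$ the anomalous term drops out, so $D\theta_1|_X$ transforms exactly like a section of $\hol_X(X)$; this shows each $\partial\theta_1/\partial z_j|_X$ is legitimate.

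The main obstacle, and the heart of the argument, is linear independence. Suppose $\sum_j c_j\,\partial\theta_1/\partial z_j - \sum_{k\geq 2} b_k\theta_k$ vanishes on $X$; since $X$ is smooth, $\theta_1$ generates its ideal and the function is $\theta_1\cdot h$ for some holomorphic $h$ on $V$. Comparing automorphy factors forces $h(z+\lambda)-h(z)=\pi H(c,\lambda)$, constant in $z$ for each $\lambda$. Then every $\partial h/\partial z_i$ is $\Lam$-periodic, hence constant, so $h=\alpha\cdot z+\beta$ is affine-linear and $\alpha\cdot\lambda = \pi H(c,\lambda)$ on $\Lam$. As $\Lam$ spans $V$ over $\RR$, the holomorphic functional $z\mapsto\alpha\cdot z$ and the antiholomorphic functional $z\mapsto\pi H(c,z)$ agree as $\RR$-linear maps on all of $V$; a functional that is simultaneously $\CC$-linear and conjugate-linear must vanish, so $H(c,\cdot)\equiv 0$, and nondegeneracy of the polarization forces $c=0$. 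Then $\sum b_k\theta_k|_X=0$ gives all $b_k=0$. Hence the $d-1+g$ sections are independent, so they form a basis of $H^0(X,K_X)$, and $\Phi_X$ is expressed in the stated coordinates.
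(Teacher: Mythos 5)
Your proof is correct and complete. Note that the paper itself offers no proof of Lemma \ref{canmap}: it is introduced as a ``folklore Lemma,'' with the actual argument living in the references \cite{Cat-Ces} and \cite{c-s}, so there is no in-paper proof to compare against. Your write-up is exactly the standard argument one would supply: adjunction plus the vanishing $H^i(A,\hol_A(X))=0$ for $i>0$ gives $h^0(X,K_X)=(d-1)+g$ via the restriction sequence; differentiating the functional equation $\theta_1(z+\lambda)=e_\lambda(z)\theta_1(z)$ shows the anomalous term $\pi H(c,\lambda)\theta_1(z)$ dies on $X$, so the partials restrict to sections of $\hol_X(X)$; and the independence step correctly reduces, via the factorization $G=\theta_1 h$ and periodicity of the derivatives of $h$, to the observation that a functional which is simultaneously $\CC$-linear and conjugate-linear vanishes, whence $c=0$ by nondegeneracy of the polarization. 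Two small points worth making explicit: the factorization $G=\theta_1h$ only needs the divisor $\{\theta_1=0\}$ to be reduced (smoothness of $X$, which the surrounding text of the paper does assume, is more than enough); and the conclusion that the $(d-1)+g$ listed sections give the \emph{complete} canonical system is precisely what the dimension count at the start of your argument is for, so the two halves of your proof are both needed and fit together as intended.
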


Hence first of all the canonical map is an embedding if $H^0(A, \hol_A(X))$ yields an embedding of $A$; secondly, 
since a projection of $\Phi_X$ is the Gauss map of $X$, given by 
$(\frac{ \partial \theta_1}{ \partial z_1},\dots,  \frac{ \partial \theta_1}{ \partial z_g}),$
follows that 
 the canonical system $|K_X|$ is base-point-free and 
$\Phi_X$  a  finite morphism. 
 
 This is the main Theorem of \cite{Cat-Ces}:

\begin{thm}\label{genbirat}
Let $(A,X)$ be a general pair, consisting of a hypersurface $X$ of dimension $n = g-1$ in an Abelian variety $A$, such that 
the class of $X$ is
a polarization of type  $\overline{d} : = (d_1, d_2, \dots, d_{g})$ with Pfaffian $ d = d_1\dots d_g > 1$.

Then the canonical map $\Phi_X $ of $X$ is birational onto its image $\Sigma$.
\end{thm}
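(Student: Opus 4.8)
The plan is to obtain birationality for the general pair by a specialisation argument, feeding a well-chosen degeneration into the general-birationality criteria of Section 2. As a preliminary reduction I would use Lemma \ref{canmap}, which presents $\Phi_X$ as the product of the Lefschetz coordinates $(\theta_2, \dots, \theta_d)$ with the Gauss map $(\partial\theta_1/\partial z_1, \dots, \partial\theta_1/\partial z_g)$ of $X$. When $d_1 \geq 3$ the system $|X|$ already embeds $A$ by Lefschetz's theorem, so the Lefschetz coordinates alone embed $X = \{\theta_1 = 0\}$ and there is nothing to prove; likewise the types $(m, \dots, m)$ with $m \geq 3$ are disposed of on $Jac(C)$ itself. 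The real content is therefore concentrated in the types with $d_1 \in \{1, 2\}$, and these I would attack uniformly. Throughout, the relevant non-uniruledness hypothesis is automatic: $X$ is ample in an abelian variety, so $K_X = \hol_X(X)$ is ample, $X$ is of general type, and neither $X$ nor its canonical image $\Sigma$ is uniruled.

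Next I would build the degenerations. Since the moduli space $\sA_{\overline{d}}$ of $\overline{d}$-polarised abelian varieties is irreducible, any special pair is a specialisation of the general one and can be joined to it by a one-parameter family $p : \sX \to T$ with $0 \in T$. For the central fibres I would take unramified abelian coverings of hyperelliptic Jacobians, exactly as in Sections 4 and 5: fix a hyperelliptic curve $C$ of genus $g$ and an \'etale isogeny $\pi : \tilde A \to Jac(C)$ whose kernel $H$ has order $d = d_1 \cdots d_g$ and is chosen so that $X_0 := \pi^{*}\Theta_C$ is of polarisation type $\overline{d}$ (such $(C,H)$ can be produced for every $\overline d$ with $d > 1$). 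Then $X_0$ is a normal hypersurface, and $\pi|_{X_0} : X_0 \to \Theta_C$ is \'etale of degree $d$; this is precisely the situation of the coverings $\hat X$ of Proposition \ref{double-etale-map}. The projection of $\Phi_{X_0}$ onto the Gauss coordinates is $\mu_C \circ \pi|_{X_0}$, whose degree is $d \cdot 2^{g-1}$ by Theorem \ref{andreotti}, while the Lefschetz coordinates, being the characters of $H$, refine this covering of $\PP^{g-1}$. The degree and the monodromy group $M_0$ of $\Phi_{X_0}$ can thus be computed from the combinatorics of the hypercube group $G = (\ZZ/2)^g \rtimes \frak S_g$ via Lemmas \ref{subgroups} and \ref{comp-mon}, together with the dichotomy of Proposition \ref{double-etale-map} (balanced versus unbalanced factorisation $f = f_1 f_2$), which yields special fibres with genuinely different block structures.

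With these computations in hand I would apply the Corollary to Theorem \ref{blocks}. First I would check hypothesis (**) for each family by resolving it through Proposition \ref{smooth}: after a modification the fibres $\Gamma_t$ are irreducible for $t \neq 0$, whereas $\Gamma_0$ is the strict transform of $X_0$ together with uniruled exceptional components, which cannot dominate the non-uniruled image. I would then exhibit two specialisations triggering the Corollary: either two hyperelliptic central fibres whose canonical degrees $d(t_j)$ are coprime, forcing the general degree $\delta$ to divide $1$; or, as in part (II) of the Corollary, one fibre with primitive monodromy $M_0 \subset \frak S_{d_1}$ and a second whose degree $d_2$ is not divisible by $d_1$, which again forces $\delta = 1$. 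In either case the general canonical map $\Phi_X$ is birational onto $\Sigma$.

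The main obstacle I expect is the precise determination of $M_0$ and of the degrees $d(t_j)$ at the special fibres: one must disentangle how the $d$ Lefschetz characters and the degree-$2^{g-1}$ Gauss covering interact, and read off the resulting partition of $\{1, \dots, d(t_j)\}$ in the language of Lemma \ref{comp-mon}. Equally delicate is arranging, for every type $\overline d$ with $d > 1$ --- and most acutely for the degenerate borderline cases $d_1 = 1$ and $d_1 = 2$, where one special fibre may factor through a double cover of $\PP^{g-1}$ and hence fail to be birational --- two specialisations whose invariants are coprime, or whose monodromy is primitive against a non-dividing degree. Supplying exactly this numerical and group-theoretic input is the role of the explicit study of hyperelliptic theta divisors and their \'etale covers carried out in Sections 3--5.
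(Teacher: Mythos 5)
Your outline shares the paper's skeleton (specialisation to \'etale covers of hyperelliptic theta divisors, fed into the machinery of Section 2), but the engine you propose for the concluding step --- producing special fibres whose canonical degrees $d(t_j)$ are computable and coprime --- breaks down precisely in the hardest case. After the reduction, the critical polarisation type is $(1,\dots,1,2)$, and there \emph{every} special fibre of the form $X_0=\pi^{*}\Theta$ (with $\Theta$ symmetric) has canonical map factoring through the involution $\iota(z)=-z+\eta$: by Lemma \ref{canmap} the coordinates of $\Phi_{X_0}$ are $\theta_2$ and the partials of $\theta_1$, and each of these is multiplied by $-1$ under $\iota$, so $\Phi_{X_0}$ has even degree; in the type $(1,1,2)$ example quoted in the paper this degree is exactly $2$. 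Hence all the degrees you can extract from pull-backs of theta divisors have GCD at least $2$, and the Corollary to Theorem \ref{blocks} can only yield $\delta\mid 2$. The paper gets past this not by another specialisation but by an argument of a different nature: a Kuranishi-dimension count (the ``Final step'', following \cite{c-s}) showing that a general deformation of $X$ is not the pull-back of a theta divisor, hence not $\iota$-invariant, so the residual degree-$2$ possibility $\Sigma=X/\iota$ cannot persist. Your proposal contains no substitute for this deformation-theoretic step, and without it the specialisation method stalls at degree $2$. Relatedly, computing $d(t_j)$ at a special fibre is not a combinatorial by-product of the hypercube group: it is essentially the whole content of Steps II--III of the paper (dihedral representation theory on $H^0(K_X)$, ramification multiplicities from Proposition \ref{branch}, and the two bidouble structures of Proposition \ref{double-etale-map}).

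Two further ingredients are missing and are needed even for the block analysis you invoke. First, the paper's Step I reduces to type $(1,\dots,1,p)$ with $p$ \emph{prime}; your weaker reduction to $d_1\in\{1,2\}$ leaves abelian kernels $H$ of arbitrary order $d$, for which the lattice of intermediate covers is no longer controlled by Lemma \ref{subgroups} and the dihedral group, so the list of candidate factorisations $\tilde{W}\in\{\Theta, Z, P, \tilde{Z}\}$ is not available. Second, besides the two hyperelliptic specialisations (the cases $d_1=1$ and $d_1,d_2\ge 2$ of Proposition \ref{double-etale-map}, which you do identify), the paper crucially specialises to the theta divisor of a \emph{non-hyperelliptic} curve, where the Gauss map has monodromy $\frak S_{2g-2}$; the simplicity of $\frak A_{2g-2}$ for $g\ge 4$ is what excludes the surviving degree-$4$ factorisation through $\hat{W}$. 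Without that third specialisation, and without the $D_p$-representation argument handling $p>2$, several branches of your ``uniform'' attack remain open.
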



\subsection{A new proof of  theorem \ref{genbirat} }
For the reader's convenience we borrow now a simple argument   contained in \cite{Cat-Ces},
yielding first a reduction step:

{\bf Step I : It suffices to prove the  Theorem in  the case of a polarization of type  $(1,\dots, 1, p)$,
with $p$ a prime number, and assuming $g \geq 2$.}

We deal then  with the following specializations:

{\bf Step II:} {\bf Consider the  cases where $X$ is an \'etale  pull-back of a Theta divisor $\Theta$.}

Here, we shall  assume that $X$ is a polarization of type  $(1,\dots, 1, p)$,
and that $X$ is the pull-back of a Theta divisor $\Theta \subset A'$ (that is, $\Theta$  yields a principal polarization)
 via an isogeny 
$\be : A \ra A'$ with kernel $\cong \ZZ/p$.

We define 
\begin{equation}\label{Z}
Z : = \Theta / \pm 1, 
\end{equation}
and observe that $Z$
is a dihedral quotient of $X$, $ Z = X/ D_p$.

 In this situation,   4.7 of \cite{Cat-Ces} uses that the canonical system 
is a  representation of the group $D_p$ to   show  that  the canonical map
of $X$ separates the general fibres of $ X \ra Z$ for $p>2$, and that  we may assume this
also   for $p=2$ after a  deformation of $X$ (this argument shall be recalled in the final step).

{\bf Step III:} First we shall assume that $\Theta = \Theta_C $ is the Theta divisor
of a hyperelliptic curve, hence we have  $\be : A \ra Jac(C) = : A'$.

Here, there is a dihedral covering of $\PP^1$ with group $D_p$ yielding 
an unramified $\ZZ/p$ covering $C' \ra C$,
and $D_p^{g-1} \rtimes \frak S_{g-1}$ acts on $(C')^{g-1}$. 

 Observe that 
$\ZZ/p$ acts also on the canonical image, non trivially as we saw.

Assume that the canonical map
$\Phi_X : X \dasharrow  \Sigma$ is not birational, and that it factors through 
a normal variety $W$ which is birational to $\Sigma$; set then 
\begin{equation}\label{int-down}
\tilde{W} : = W / (\ZZ/p)
\end{equation}.

We have a factorization of 
$$ f \circ \Phi_X : X \ra \Theta \ra \tilde{W} \ra  \PP^{g-1},$$
where $f : \Sigma \dasharrow  \PP^{g-1}$ is the projection corresponding to the Gauss map
of $X$, which equals the Gauss map of $\Theta_C$.

By Lemma \ref{subgroups} there are four  cases possible:

\begin{enumerate}
\item
$  \tilde{W} = \Theta  $;
\item
$\Theta \ra  \tilde{W}$ has degree $2$, and $  \tilde{W} = Z  = \Theta / \pm 1$; 
\item
$ \tilde{W} = P : = \PP^{g-1}$.
\item
$ \tilde{W} \ra P$ has degree  $2$ and $ \tilde{W}  = \tilde{Z}$.
\end{enumerate}

Cases (1) and (2) are eliminated by virtue of  Step II, as follows.  

In case (1) we would have either $W=X$,
hence birationality holds, or $W = \Theta$, contradicting  Step II.

In case (2) the general  fibres  of $ X \ra W$ would be contained
in the fibres of $ X \ra Z = X/D_p$, again contradicting  Step II for $p\neq 2$.

For $p=2$, either $W = Z$, and we are done by Step II, or we have a double covering,
and by Step II a general deformation becomes birational.

In cases (3) and (4) $\tilde{W} \ra P : = \PP^{g-1}$ is either the identity or a double covering.
But, in any case, since $\ZZ/ p$ acts faithfully on the fibres of 
$\Sigma \dasharrow  \tilde{W}$,
it follows that the degree $m$ of the covering $\Sigma \dasharrow P$ (hence of $W \ra P$) is either  $2p$ or $p$.

 We have two factorizations of 
the Gauss map $f$:
$$ X \ra Z \ra P , \ X \ra W  \ra P.$$

Consider now the respective ramification divisors $\sR_f,  \sR = \sR_{Z},  \sR_W$
of the respective maps $f : X \ra P$, $\Psi : Z \ra P$, $ W \ra P$.

Since $ X \ra Z$ is quasi-\'etale (unramified in codimension $1$), $\sR_f$ is the inverse image of $\sR$,
hence  $\sR_f$  maps to $\sR$ with mapping degree $2p$.

We use now the notation and the results of Proposition \ref{branch}.
It turns out that all the components of $\sR_f$ have multiplicity $1$,
since the same happens for the components of $\sR$.

This excludes right away the case $ p \geq 3$, since a cyclic covering of degree $p$ has
a ramification divisor occurring with multiplicity $(p-1)$, and moreover $P$ and $\tilde{Z}$
do not admit unramified coverings.

Moreover, by  Step II,  we may assume that $ Z \neq \tilde{Z}$, hence that $ g \geq 4$.

We are then left with the case where $p=2$.

Here we can use Proposition \ref{double-etale}, first under the assumption that
we choose $d_1=1 , d_2 =g$. 

 The image of $X$, which equals the one of $\hat{X}$,
is the double cover $\hat{W}$ of $P$ with branch locus $H_1 + H_2$,  the union 
of  $2$ Hyperplanes.

Since $W = \hat{W}$  case (4) is clearly excluded, since  $\hat{W}$ is  not a double  covering of $\tilde{Z}$.

Use now Proposition \ref{double-etale} under the assumption that
 $d_1 , d_2 \geq 2$.
Then case  (3), where    $W $ would be the double covering
of $P= \PP^{g-1}$ branched on a branch divisor $\sB' \subset  \De \cup \sH$, 
is also excluded.

\bigskip

{\bf Step IV:} 
To finish the proof, consider the more general  case where $X$ is a double \'etale covering of a smooth Theta divisor $\Theta$.

As observed in \cite{c-s}, we have a basis $\theta_1, \theta_2$ of even functions,
i.e., such that $\theta_i (-z) = \theta_i(z)$, and  $Z : = \Theta/ \pm 1 = X / (\ZZ/2)^2$, where $(\ZZ/2)^2$
acts sending $ z \mapsto \pm z + \eta$, where $\eta$ is a $2$-torsion point on $A$.
Then the canonical map  $\Phi_X$, since the partial derivatives of $\theta_1$ are invariant for $ z \mapsto  z + \eta$,
while $\theta_2 (z + \eta) = - \theta_2 (z)$, factors through the involution $\iota : X \ra X$
such that
$$ \iota (z) =  -z + \eta.$$
Assume that we have a further factorization $ X \ra X/\iota \ra \Sigma  $ of the canonical map,
and recall  that $X/\iota$ is a double covering of $Z$.

Then, specializing to the case where we have the  double \'etale covering $X_0$ of the  Theta divisor  of a hyperelliptic curve, we see 
by Theorem \ref{blocks} that
we have a further factorization of the canonical map of $X_0$, $X_0 \ra \Sigma_0 \ra \hat{W}$.

 Indeed, Hypothesis (**) can be seen to hold using Theorem \ref{thetapullback}
and assertion (v) of Proposition \ref{smooth}.
 
As we argued before, $\Sigma_0 $  is a double cover of $\Sigma_0 / (\ZZ/2)$, which is
therefore either $Z$ or $\tilde{Z}$.

Accordingly, either  $\Sigma = X/\iota$ or $\Sigma_0 / (\ZZ/2) =  \tilde{Z}$, hence  $\Sigma_0$ is a degree
four covering of $P$.

In the  latter case, if we take the degrees $d_1, d_2 \geq 2$, it would follow that $\Sigma_0 =  \hat{W}$.
 
Hence  the monodromy of $\Sigma \ra P$ would land in $\frak S_4$.

However, if we specialize to  the Theta divisor of a non hyperelliptic curve, the Monodromy  group
of the covering $ C^{(g-1)} \sim \Theta_C \ra P$  is equal to $\frak S_{2g-2}$,
acting on the subsets of cardinality $(g-1)$. 

Since $ g \geq 4$, $2g-2 \geq 6$ and the group $\frak A_{2g-2}$
is simple: hence the monodromy image in $\frak S_4$ has order $2$, and cannot be transitive, whence a contradiction.

\subsection{Final step}
We repeat here the  final argument which  takes care of the case $\Sigma = X/\iota$, as in \cite{c-s}: if for a general deformation of $X$ as a symmetric divisor the canonical map would factor through $\iota$,
then $X$ would be $\iota$-invariant; being symmetric, it would be $(\ZZ/2)^2$-invariant, hence for all deformations
$X$ would remain the pull-back of a Theta divisor. This is a contradiction, since the Kuranishi family 
of $X$ has  higher dimension than the Kuranishi family  of a Theta divisor $\Theta$ (see \cite{c-s}).

\bigskip

 {\bf Acknowledgements:} the  author would like to  thank Luca Cesarano, Edoardo Sernesi and especially Ciro Ciliberto    for some  interesting conversation.
 
  He is extremely thankful to the referee for a careful reading of the manuscript, and for pointing out two obscure arguments,
 which were indeed incorrect, and needed a reparation.


\end{document}